\documentclass[11pt, a4paper, reqno]{amsart}

\usepackage[margin=1in]{geometry}
\usepackage{amsmath, amssymb, amsthm}
\usepackage{microtype}
\usepackage{pgfplots}
\pgfplotsset{compat=1.18}
\usepackage[
colorlinks=true,
urlcolor=blue,
linkcolor=blue,
citecolor=blue,
pdfstartview=FitH
]{hyperref}

\newtheorem{theorem}{Theorem}[section]
\newtheorem{lemma}[theorem]{Lemma}
\newtheorem{proposition}[theorem]{Proposition}
\newtheorem{corollary}[theorem]{Corollary}

\theoremstyle{definition}
\newtheorem{definition}[theorem]{Definition}
\newtheorem{example}[theorem]{Example}

\theoremstyle{definition}

\theoremstyle{remark}
\newtheorem{remark}[theorem]{Remark}

\theoremstyle{plain}
\newtheorem{assumption}{Assumption}

\numberwithin{equation}{section}

\newcommand{\R}{\mathbb R}
\newcommand{\C}{\mathbb C}
\newcommand{\Sph}{\mathbb S}
\newcommand{\D}{\mathcal D}
\newcommand{\E}{\mathcal E}

\DeclareMathOperator{\Div}{div}
\DeclareMathOperator{\Span}{span}

\title[Higher order isoperimetric inequalities]
{Higher order isoperimetric inequalities and rigidity
for surfaces in $\R^n$}

\author[X.-Q. Fan]{Xu-Qian Fan}
\address{
School of Mathematics,
Jinan University,
Guangzhou 510632, China
}
\email{txqfan@jnu.edu.cn}

\author[K.-K. Kwong]{Kwok-Kun Kwong}
\address{
School of Mathematics and Physics,
University of Wollongong,
Wollongong, NSW 2522, Australia
}
\email{kwongk@uow.edu.au}

\dedicatory{Dedicated to the memory of Professor Luen-Fai Tam} 

\begin{document}
\begin{abstract}
We establish a family of sharp higher order isoperimetric type inequalities for smooth compact surfaces in Euclidean spaces of arbitrary codimension. These inequalities involve arbitrarily high derivatives of the boundary curve, together with the mean curvature vector of the surface. For example, the first and second order inequalities yield the sharp two-sided estimate for the isoperimetric deficit:
\begin{align*}
& \frac{\pi}{R}\int_{\partial M}|x-R\nu|^2\, ds +2\pi\int_M x\cdot H\, dA
\le L^2-4\pi A\\
\le& \frac{\pi}{R}\int_{\partial M}|x-R\nu|^2\, ds +2\pi\int_M x\cdot H\, dA
+\frac{\pi}{3R}\int_{\partial M}|x+R^2\kappa|^2\, ds,
\end{align*}
where $R=\frac{L}{2\pi}$ and the origin is chosen to be the centroid of the boundary. For minimal surfaces, equality in the lower bound holds precisely for flat disks, whereas the second inequality admits non-round equality cases. Our results extend that of Carleman, Sachs, Chakerian, and Li-Schoen-Yau.

A distinctive feature of our work is the unexpected rigidity structure arising at higher orders. At the first and second orders, equality curves admit decompositions into circular Fourier modes supported in mutually orthogonal planes. Remarkably, this phenomenon fails at higher orders. We establish a multiplicative factorization of the unit tangent map into elementary planar rotations acting on a fixed unit vector, providing a  characterization of equality cases at arbitrary order. As a result, we construct a third order equality curve in $\mathbb R^3$ whose Fourier modes cannot be decomposed into mutually orthogonal circular modes.
\end{abstract}
\maketitle
\section{Introduction}

A fundamental result in classical geometry is the isoperimetric inequality, which asserts that for any simple closed curve in the Euclidean plane with length $L$ and enclosed area $A$, one has $L^2 \geq 4 \pi A$, where equality holds precisely when the curve is a circle. Beyond its classical form, two closely related directions have proved particularly fruitful: extending isoperimetric inequalities to submanifolds of Euclidean space, and sharpening the planar inequality by incorporating more refined geometric information.

The first line of research concerns the geometry of minimal submanifolds. Carleman's work on minimal surfaces established an early connection between minimality and isoperimetry \cite{Carleman1921}; see also Osserman's survey \cite{Osserman1978}. Subsequently, Chakerian \cite{Chakerian1978} obtained a sharpened inequality for curves bounding minimal surfaces, building on a method of Sachs \cite{Sachs1960}. Li, Schoen and Yau \cite{LiSchoenYau1984} proved the sharp isoperimetric inequality for compact minimal surfaces in $\R^n$ with weakly connected boundary (Definition \ref{def:weakly connected}), with equality precisely for flat disks. Their proof exploits the Poincar\'e inequality for the boundary coordinate functions, an approach closely related to the Wirtinger inequalities employed in the present paper. For area-minimizing spanning surfaces, Almgren \cite{Almgren1986} established optimal isoperimetric inequalities. Stone \cite{Stone2003} subsequently derived an integral representation of the isoperimetric deficit for general minimal surfaces, involving geometric quantities on the surface and its boundary, and obtained an improved universal isoperimetric estimate. A major recent development for general minimal submanifolds is due to Brendle \cite{Brendle2021}, who proved a Sobolev inequality for submanifolds in Euclidean space of arbitrary dimension and codimension. In particular, this implies the sharp isoperimetric inequality for minimal submanifolds of codimension at most two. These results relate isoperimetric inequalities to the geometry of the spanning submanifold, particularly its mean curvature.

A closely related line of research seeks sharper inequalities that capture geometric information beyond the length and enclosed area. Such refinements have a long history, including the inequalities of Sachs \cite{Sachs1960}, Chakerian \cite{Chakerian1978}, and the Bonnesen-type estimates reviewed by Osserman \cite{Osserman1978}; see also McGrath's geometric formula for the isoperimetric deficit \cite{McGrath2021}. Lee and the second author \cite{kwong2021higher} developed higher order Wirtinger-type inequalities using Fourier analysis. They obtained a family of sharp isoperimetric inequalities of arbitrarily high order for convex plane curves, as well as a second order refinement applicable to general plane curves. Here ``higher order'' means that, in addition to the perimeter and area, the inequalities involve the curvature of the boundary and its derivatives. The second author \cite{Kwong2024} subsequently developed a unified method giving arbitrary-order isoperimetric inequalities for general smooth plane curves, including sharp upper and lower bounds for the isoperimetric deficit, and extended the same spectral approach to discrete isoperimetric inequalities for polygons.

Motivated by these developments, the aim of this paper is to extend higher order isoperimetric inequalities from plane curves to surfaces in Euclidean space. In particular, we ask whether the higher order planar isoperimetric inequalities obtained using Wirtinger-type inequalities can be established for minimal surfaces in arbitrary codimension. We answer this question affirmatively. In fact, minimality is not required. Our inequalities hold for any smooth compact connected embedded surface $M\subset\R^n$ whose boundary is a smooth simple closed curve. They involve arbitrarily high derivatives of the boundary curve, together with the mean curvature of $M$. We also investigate the equality cases, where minimality leads to additional geometric restrictions and different rigidity phenomena at different orders.
At first order, we also generalize the result of Li, Schoen and Yau \cite{LiSchoenYau1984} for minimal surfaces with weakly connected boundaries; see Theorem \ref{thm:LSY isoperimetric inequality}. Our inequality holds without assuming minimality and provides a refinement involving the boundary conormal and the mean curvature of the surface.

To state the main result, let $A$ be the area of $M$, let $L$ be the length of $C$, and translate the origin to the centroid of $C$. Write $R=\frac{L}{ 2\pi }$, denote by $\nu$ the outward unit conormal to $C$ in $M$, and let $\kappa=\frac{d^2x}{ds^2}$ be the ambient curvature vector of $C$. We use the convention $H=\Delta_Mx$ for the mean curvature vector. Define
\begin{equation*}
\D(C, M)=\frac{1}{\pi}(L^2-4\pi A) -\frac1R\int_C|x-R\nu|^2\, ds -2\int_M x\cdot H\, dA,
\end{equation*}
and for each integer $l\ge1$,
\begin{equation*}
\E_l(C)=R^{2l-3}\int_C
\left|\left(\frac{d}{ds}\right)^{l-1}
(x+R^2\kappa)\right|^2\, ds.
\end{equation*}
Note that $\mathcal{E}_{m-1}(C)$ is an $m$-th order boundary quantity.
For $m\ge2$, introduce the polynomials
\begin{equation*}
P_m(u)=\prod_{j=2}^m(u-j^2), \qquad \frac{P_m(u)-P_m(1)}{u-1} =\sum_{l=1}^{m-1}s_{m, l}u^{l-1}.
\end{equation*}
Our main result is the following hierarchy.

\begin{theorem}[Higher order isoperimetric inequalities]
Under the above assumptions,
$$
\mathcal{D}(C, M)\ge0,
$$
and for every integer $m\ge2$,
\begin{equation*}
P_m(1)\, \D(C, M)+\sum_{l=1}^{m-1}s_{m, l}\E_l(C)\ge0, \qquad P_m(1)=\frac{(-1)^{m-1}}2(m-1)!(m+1)!.
\end{equation*}
Equality in the inequality of order $m$ holds if and only if, under the parameter $t=\frac{s}{R}$, the boundary position vector has the finite Fourier expansion
\begin{equation}\label{eq:intro-fourier}
x(t)=\sum_{k=1}^m\left(C_k\cos kt+D_k\sin kt\right), \qquad C_k, D_k\in\R^n.
\end{equation}
\end{theorem}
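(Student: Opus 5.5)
Set $t=s/R$, so that $x(t)$ is $2\pi$-periodic with $|x'(t)|=R$. Expand $x(t)=\sum_{k\ge1}(C_k\cos kt+D_k\sin kt)$; there is no $k=0$ term because the origin is the centroid of $C$. Write $a_k=|C_k|^2+|D_k|^2$. The plan is to reduce every quantity in the statement to a weighted sum $\pi\sum_k w(k)a_k$, with the area entering only through the divergence theorem on $M$, and then to read off each inequality as nonnegativity of a polynomial in $k^2$ on the integers $k\ge1$.

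\textbf{Step 1 (area).} Since $\Div_M x = 2$ on the surface, we have $\frac12\Delta_M|x|^2=2+x\cdot H$. Integrating over $M$ gives
\[
2A=\int_C x\cdot\nu\, ds-\int_M x\cdot H\, dA .
\]
Hence
\[
\frac1R\int_C|x-R\nu|^2ds=\frac1R\int_C|x|^2ds-2\int_C x\cdot\nu\, ds+RL
=\frac1R\int_C|x|^2ds-4A-2\int_M x\cdot H\,dA+RL .
\]
Substituting this into $\D$, the mean curvature terms and the area terms cancel. Using $L^2/\pi=2RL$, we obtain
\[
\D=RL-\frac1R\int_C|x|^2\,ds=\int_0^{2\pi}\bigl(|x'|^2-|x|^2\bigr)\,dt=\pi\sum_k (k^2-1)a_k .
\]
So $\D\ge0$ is exactly Wirtinger's inequality, and it is an equality iff $a_k=0$ for all $k\ge2$.

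\textbf{Step 2 (the $\E_l$).} We have $R^2\kappa=x''(t)$, so $x+R^2\kappa=x+x''$ and $\frac{d}{ds}=R^{-1}\frac{d}{dt}$. Parseval then gives
\[
\E_l=R^{2l-3}\cdot R^{-2(l-1)}\cdot R\int_0^{2\pi}\bigl|\partial_t^{\,l-1}(x+x'')\bigr|^2dt
=\pi\sum_k k^{2(l-1)}(k^2-1)^2a_k .
\]
The powers of $R$ cancel.

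\textbf{Step 3 (the polynomial identity).} Combining Steps 1 and 2, and putting $u=k^2$, the left-hand side of the order-$m$ inequality equals
\[
\pi\sum_k (u-1)\Bigl[P_m(1)+(u-1)\sum_l s_{m,l}u^{l-1}\Bigr]a_k=\pi\sum_k (k^2-1)P_m(k^2)\,a_k .
\]
This follows from the defining relation $(u-1)\sum_l s_{m,l}u^{l-1}=P_m(u)-P_m(1)$.

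It remains to show that $(k^2-1)\prod_{j=2}^m(k^2-j^2)\ge0$ for every integer $k\ge1$, with equality exactly for $k\le m$. For $1\le k\le m$ one factor vanishes. For $k>m$ every factor is positive. Hence the order-$m$ expression is nonnegative, and it vanishes iff $a_k=0$ for all $k>m$, which is precisely the finite expansion \eqref{eq:intro-fourier}.

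The value of $P_m(1)$ is a direct computation:
\[
P_m(1)=\prod_{j=2}^m(1-j)(1+j)=(-1)^{m-1}(m-1)!\,\frac{(m+1)!}{2}.
\]

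\textbf{Main obstacles.} The only delicate point is Step 1: the sign conventions must make the $x\cdot H$ terms and the area terms cancel exactly, and the boundary term $\int_C x\cdot\nu\,ds$ must be absorbed into $\int_C|x-R\nu|^2\,ds$. Smoothness of $x$ justifies termwise differentiation and Parseval. Note also that the embeddedness and connectedness hypotheses are not needed for the inequalities themselves.
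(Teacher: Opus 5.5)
Your proof is correct and follows essentially the same route as the paper. The divergence-theorem identity $2A=\int_C x\cdot\nu\,ds-\int_M x\cdot H\,dA$ reduces $\D$ to $\int_0^{2\pi}(|\dot x|^2-|x|^2)\,dt$ and $\E_l$ to $\int_0^{2\pi}|x^{(l+1)}+x^{(l-1)}|^2\,dt$ (Lemma~\ref{lem:boundary-identity}); Parseval and the relation $(u-1)S_m(u)=P_m(u)-P_m(1)$ then give the nonnegative weights $\prod_{j=1}^m(k^2-j^2)$, exactly as in Proposition~\ref{prop:wirtinger}.
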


For example, the first two nontrivial members of the hierarchy give

\begin{equation*}
\begin{aligned}
0\le \D(C,M)
&=\frac{L^2-4\pi A}{\pi}
-\frac1R\int_C|x-R\nu|^2\,ds
-2\int_M x\cdot H\,dA\\
&\le \frac13\E_1(C)
=\frac{1}{3R}\int_C|x+R^2\kappa|^2\,ds,
\end{aligned}
\end{equation*}
and
\begin{equation*}
\begin{aligned}
\D(C,M)
&\ge \frac12\E_1(C)-\frac1{24}\E_2(C)\\
&=\frac{1}{2R}\int_C|x+R^2\kappa|^2\,ds
-\frac{R}{24}\int_C
\left|\frac{d}{ds}(x+R^2\kappa)\right|^2\,ds.
\end{aligned}
\end{equation*}

Although $\D(C,M)$ involves the conormal and mean curvature of $M$, remarkably it depends only on the boundary curve $C$, since $\D(C,M)=\int_0^{2\pi}(|\dot x|^2-|x|^2)\,dt$ by Lemma \ref{lem:boundary-identity}. Thus, our inequalities hold without assuming minimality, although minimality imposes additional rigidity on the equality cases.

Indeed, the equality cases are the most interesting aspect of our results. The second author  and Lee's higher order Wirtinger inequality \cite{kwong2021higher} singles out finite Fourier expansions, as in \eqref{eq:intro-fourier}. For curves in higher-dimensional Euclidean space, however, the constant-speed condition imposes further algebraic constraints on their Fourier coefficients. At first order, equality forces $C$ to be a round circle; if $M$ is minimal, it then forces $M$ to be a flat disk. At second order we obtain a normal form: up to an ambient orthogonal transformation,
\begin{equation}\label{eq:intro-second-normal-form}
x(t)=a(e_1\cos t+e_2\sin t) +b(e_3\cos2t+e_4\sin2t), \qquad a>0, \quad b\ge0, \quad a^2+4b^2=R^2,
\end{equation}
where $e_1, e_2, e_3, e_4$ are orthonormal whenever $b>0$. Thus the two nonzero circular modes lie in orthogonal two-planes. In particular, if $n\le3$ and $M$ is minimal, equality at second order occurs precisely when $M$ is a flat disk. The restriction on ambient dimension is essential: in $\R^4\cong\C^2$, the holomorphic disk
\begin{equation*}
M=\{(z, z^2): |z|\le1\}, \qquad x(t)=(\cos t, \sin t, \cos2t, \sin2t)
\end{equation*}
is a non-flat minimal surface achieving equality at second order.

At third and higher orders, this orthogonal description no longer suffices. Even though the equality condition still restricts $x$ to finitely many Fourier modes, distinct modes need not lie in mutually orthogonal planes. This phenomenon already occurs for curves in $\R^3$. To describe the equality cases, we consider the unit tangent map $T(t)=\frac{\dot x(t)}{R}$. In the equality case, $T=T(z): \Sph^1\to\Sph^{n-1}$ is a finite Laurent-polynomial loop for $z=e^{it}$. This relates the equality problem to the theory of polynomial loops on spheres \cite{BauerCrabb2004}. We prove that the unit tangent map admits a decomposition into finitely many elementary rotations. More precisely, the equality condition of order $m$ is equivalent to the existence of an integer $1\le r\le m$, oriented two-planes $V_1, \dots, V_r\subset\R^n$, and a fixed unit vector $\tau_0\in\Sph^{n-1}$ such that
\begin{equation}\label{eq:intro-factorization}
\frac{\dot x(t)}{R}= \mathcal R_{V_r}(t)\mathcal R_{V_{r-1}}(t) \cdots\mathcal R_{V_1}(t)\tau_0,
\end{equation}
where $\mathcal R_V(t)$ denotes rotation through angle $-t$ in the oriented plane $V$, acting as the identity on $V^\perp$. The planes $V_j$ are not required to be mutually orthogonal.

An illustration (Example \ref{ex:third-order}) is the curve in $\R^3$ given by
\begin{equation*}
x(t)=\left( 4\sin t, \; -2\cos t+\cos2t+\frac23\cos3t, \; -2\sin t+\sin2t+\frac23\sin3t \right).
\end{equation*}
It is a smooth simple closed curve, has zero centroid, and satisfies $|\dot x(t)|^2\equiv20$. Since its position vector contains precisely the first three Fourier frequencies, it achieves equality in the third order inequality for any smooth embedded surface that it bounds, i.e. $\D=\frac{\E_1}{2}-\frac{\E_2}{24}$. The curve is noncircular and nonplanar, and its three nonzero modes cannot be supported on mutually orthogonal two-planes in $\R^3$. It therefore demonstrates why the higher order rigidity problem requires the multiplicative factorization \eqref{eq:intro-factorization}, rather than a straightforward extension of the orthogonal normal form \eqref{eq:intro-second-normal-form}.

The rest of the paper is organized as follows. Section~\ref{sec:preliminaries} recalls the higher order Wirtinger inequalities and establishes notation. Section \ref{sec:inequalities} establishes the higher order isoperimetric inequalities and their equality conditions, together with first-order extensions to surfaces with multiple boundary components. In Section~\ref{sec:low-rigidity}, we establish rigidity results for the first- and second-order inequalities and provide an example of a non-flat minimal surface in $\mathbb{R}^4$ for which equality holds in the second order case. Finally, Section~\ref{sec:loops} establishes the multiplicative factorization of unit algebraic loops and characterizes all higher order equality cases. We also show that in $\mathbb R^3$, non-flat minimal disks attaining equality exist at every order $m\ge3$.

\section{Preliminaries}\label{sec:preliminaries}

\subsection{Notation}
In this paper, we primarily consider surfaces satisfying the following assumption.

\begin{assumption}\label{ass:A}
Let $M\subset\R^n$, $n\ge2$, be a smooth compact connected embedded surface whose boundary $C=\partial M$ is a smooth simple closed curve. Let $x$ denote the position vector and $s$ the arclength along $C$. Assume that the origin is the centroid of $C$, so that
\begin{equation*}
\int_C x\,ds=0.
\end{equation*}
\end{assumption}

While most of our results are established under \hyperref[ass:A]{Assumption~\ref*{ass:A}}, we also extend the first-order inequality to surfaces with multiple boundaries in Subsection~\ref{subsec:weakly-connected}. The precise assumptions are stated in each result.

Let $A$ be the area of $M$ and $L$ be the length of $C$. The vector $\nu$ denotes the outward unit conormal of $C$ in $M$.
The ambient curvature vector of $C$ is
$$\kappa=\frac{ d^2x}{ d s^2}. $$
We use the following convention for the mean curvature vector:
\begin{equation*}
H=\Delta_M x,
\end{equation*}
where $\Delta_M=\Div_M\nabla_M$.

Set
\begin{equation*}
R=\frac{L}{2\pi}, \qquad t=\frac{s}{R}.
\end{equation*}
By abuse of notation, we also use $x(t)$ for the resulting $2\pi$-periodic parametrization of $C$ and a dot for differentiation with respect to $t$. We also write $x^{(l)}=\frac{d^l}{d t^l}x$. Derivatives with respect to $s$ are written explicitly. Then we have
\begin{equation*}
|\dot x(t)|=R, \qquad \int_0^{2\pi}x(t) d t=0,
\qquad d s=R d t.
\end{equation*}
We recall the following definitions, already introduced in the Introduction.
\begin{definition}\label{def:deficit}
Define
\begin{equation*}
\D(C, M)=\frac{L^2-4\pi A}{\pi} -\frac1R\int_C|x-R\nu|^2 d s -2\int_M x\cdot H d A.
\end{equation*}
For each integer $l\ge1$, define
\begin{equation}\label{eq:curvature-energies}
\E_l(C)=R^{2l-3}\int_C \left|\left(\frac{ d }{ d s}\right)^{l-1} (x+R^2\kappa)\right|^2 d s.
\end{equation}
We may abbreviate these quantities to $\D$ and $\E_l$.
\end{definition}

\subsection{Higher order Wirtinger inequalities}
For $m\ge2$, as in the Introduction,  we recall  
\begin{equation*}
P_m(u)=\prod_{j=2}^{m}(u-j^2),
\end{equation*}
and 
\begin{equation}\label{eq:constant}
P_m(1)=\frac{(-1)^{m-1}}2(m-1)!(m+1)!.
\end{equation}
We also set
\[S_m(u)=\frac{P_m(u)-P_m(1)}{u-1} =\sum_{l=1}^{m-1}s_{m, l}u^{l-1}.\]
In particular,
\begin{equation}\label{eq:first-polynomials}
S_2(u)=1, \quad P_2(1)=-3,
\qquad S_3(u)=u-12, \quad P_3(1)=24.
\end{equation}

The following family of higher order Wirtinger inequalities \cite[Proposition~1(c)]{kwong2021higher} is the main tool in this paper. For reader's convenience, we include a proof and the equality conditions.

\begin{proposition}\label{prop:wirtinger}
Let $f:\R\to\R^n$ be $2\pi$-periodic and have zero mean.
\begin{enumerate}
\item
If $f$ is of class $C^1$, then
\begin{equation}\label{eq:wirtinger-first}
\int_0^{2\pi}(|\dot f|^2-|f|^2) d t\ge0,
\end{equation}
with equality if and only if $f(t)=C_1\cos t+D_1\sin t$ for some $C_1, D_1\in\R^n$.
\item
If $m\ge2$ and $f$ is of class $C^m$, then
\begin{equation}\label{eq:wirtinger-higher}
P_m(1)\int_0^{2\pi}(|\dot f|^2-|f|^2) d t +\sum_{l=1}^{m-1}s_{m, l} \int_0^{2\pi}|f^{(l+1)}+f^{(l-1)}|^2 d t\ge0.
\end{equation}
Equality in \eqref{eq:wirtinger-higher} holds if and only if
\begin{equation*}
f(t)=\sum_{k=1}^{m}(C_k\cos kt+D_k\sin kt), \qquad C_k, D_k\in\R^n.
\end{equation*}
\end{enumerate}
\end{proposition}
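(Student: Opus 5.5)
The plan is to expand $f$ in its real Fourier series and compare coefficients mode by mode. Since $f$ is $2\pi$-periodic with zero mean, we write
\[
f(t)=\sum_{k\ge1}(C_k\cos kt+D_k\sin kt),
\]
and set $a_k=\pi(|C_k|^2+|D_k|^2)$.

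For $f\in C^1$, Parseval gives
\[
\int_0^{2\pi}|f|^2\,dt=\sum_k a_k,\qquad \int_0^{2\pi}|\dot f|^2\,dt=\sum_k k^2a_k.
\]
Hence
\[
\int_0^{2\pi}(|\dot f|^2-|f|^2)\,dt=\sum_{k\ge1}(k^2-1)a_k,
\]
which is $\ge0$. Equality holds iff $a_k=0$ for all $k\ge2$, i.e. $f=C_1\cos t+D_1\sin t$. This proves (1).

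For (2), assume $f\in C^m$. Then $f^{(l+1)}+f^{(l-1)}\in C^0\subset L^2$ for $1\le l\le m-1$, and its $k$-th mode has coefficients multiplied by $\pm k^{l-1}(1-k^2)$. Parseval therefore gives
\[
\int_0^{2\pi}|f^{(l+1)}+f^{(l-1)}|^2\,dt=\sum_k k^{2(l-1)}(k^2-1)^2a_k .
\]
All series converge, since $\sum_k k^{2m}a_k<\infty$ and $l+1\le m$. The left side of \eqref{eq:wirtinger-higher} thus equals
\[
\sum_{k\ge1}(k^2-1)\,a_k\Big[P_m(1)+(k^2-1)\sum_{l=1}^{m-1}s_{m,l}k^{2(l-1)}\Big].
\]
By the definition of $S_m$, we have $P_m(1)+(u-1)S_m(u)=P_m(u)$. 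So the bracket equals $P_m(k^2)$, and the expression becomes
\[
\sum_{k\ge1}(k^2-1)P_m(k^2)\,a_k=\sum_{k\ge1}\prod_{j=1}^m(k^2-j^2)\,a_k .
\]

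Next we check the sign of each factor $\prod_{j=1}^m(k^2-j^2)$:
\begin{itemize}
\item for $1\le k\le m$ the product contains the factor $j=k$, so it vanishes;
\item for $k>m$ every factor is positive.
\end{itemize}
Hence the sum is $\ge0$. Equality holds iff $a_k=0$ for all $k>m$, which is exactly $f=\sum_{k=1}^m(C_k\cos kt+D_k\sin kt)$. Conversely, such $f$ gives zero.

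The formula \eqref{eq:constant} for $P_m(1)$ is a side computation:
\[
\prod_{j=2}^m(1-j^2)=(-1)^{m-1}\prod_{j=2}^m(j-1)(j+1)=(-1)^{m-1}(m-1)!\,\frac{(m+1)!}{2}.
\]

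The only delicate point is justifying Parseval for the derivatives. This needs $f^{(l+1)}\in L^2$ for $l\le m-1$, which $C^m$ regularity provides. We also need term-by-term differentiation of the Fourier coefficients: the coefficients of $f^{(j)}$ are obtained by integrating by parts $j$ times, and periodicity kills the boundary terms. The rest is the algebraic identity $P_m(1)+(u-1)S_m(u)=P_m(u)$.
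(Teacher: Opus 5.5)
Your proposal is correct and follows essentially the same route as the paper: expand $f$ in Fourier series, apply Parseval mode by mode, and use $P_m(1)+(u-1)S_m(u)=P_m(u)$ to reduce the coefficient of $\pi(|C_k|^2+|D_k|^2)$ to $\prod_{j=1}^m(k^2-j^2)$. You also spell out two things the paper leaves implicit, namely the sign check of this product for $k\le m$ and $k>m$, and the regularity justification for Parseval on the derivatives.
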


\begin{proof}
Write the Fourier expansion as $f(t)=\sum_{k=1}^{\infty}(C_k\cos kt+D_k\sin kt) =\sum_{k=1}^{\infty}f_k(t)$. By Parseval's identity,
\begin{equation}\label{eq 1st order}
\int_0^{2\pi}(|\dot f|^2-|f|^2) d t
=\pi\sum_{k\ge1}(k^2-1)(|C_k|^2+|D_k|^2)\ge0.
\end{equation}
This proves \eqref{eq:wirtinger-first} and its equality condition.

For \eqref{eq:wirtinger-higher}, note that $f_k^{(l+1)}+f_k^{(l-1)}=\left(1-k^2\right) f_k^{(l-1)}$. Hence
$$
\int_0^{2 \pi}\left|f_k^{(l+1)}+f_k^{(l-1)}\right|^2 d t=\pi\left(k^2-1\right)^2 k^{2(l-1)}\left(|C_k|^2+|D_k|^2\right)
$$
and so
$$
\int_0^{2 \pi}\left|f^{(l+1)}+f^{(l-1)}\right|^2 d t=\pi \sum_{k=1}^{\infty}\left(k^2-1\right)^2 k^{2(l-1)}\left(|C_k|^2+|D_k|^2\right).
$$
Combining with \eqref{eq 1st order}, the LHS of \eqref{eq:wirtinger-higher} is
\begin{equation}\label{eq:wirtinger-higher L.H.S.}
\pi \sum_{k=1}^{\infty}\left[P_m(1)\left(k^2-1\right)+\left(k^2-1\right)^2 \sum_{l=1}^{m-1} s_{m, l} k^{2(l-1)}\right]\left(|C_k|^2+|D_k|^2\right).
\end{equation}
By definition,
$\sum_{l=1}^{m-1} s_{m, l} k^{2(l-1)}=S_m\left(k^2\right)$ and $(k^2-1) S_m(k^2)=P_m(k^2)-P_m(1)$. Therefore the coefficient in the square bracket is
\begin{align*}
P_m(1)\left(k^2-1\right)+\left(k^2-1\right)^2 S_m\left(k^2\right) =\left(k^2-1\right) P_m\left(k^2\right)
=\prod_{j=1}^m\left(k^2-j^2\right).
\end{align*}
Putting this into \eqref{eq:wirtinger-higher L.H.S.} gives
\begin{align*}
& P_m(1) \int_0^{2 \pi}\left(|\dot{f}|^2-|f|^2\right) d t+\sum_{l=1}^{m-1} s_{m, l} \int_0^{2 \pi}\left|f^{(l+1)}+f^{(l-1)}\right|^2 d t\\
=& \pi\sum_{k=1}^{\infty}\prod_{j=1}^m\left(k^2-j^2\right) (|C_k|^2+|D_k|^2)\ge0.
\end{align*}
This proves \eqref{eq:wirtinger-higher}, with equality exactly when all Fourier coefficients of order greater than $m$ vanish.
\end{proof}

\section{Higher order isoperimetric inequalities}\label{sec:inequalities}
We first derive analytic expressions for the geometric quantities defined in Definition \ref{def:deficit}.

\begin{lemma}\label{lem:boundary-identity}
For a compact surface $M\subset\R^n$ with (possbily disconnected) smooth boundary $C=\partial M$, we have
\begin{equation}\label{eq:boundary-identity}
\D(C,M)=RL-\frac1R\int_C|x|^2\,ds.
\end{equation}
In particular, it is independent of the spanning surface $M$.

If $C$ is connected, then
\begin{equation}\label{eq:boundary-identity-integral}
\D(C,M)=\int_0^{2\pi}(|\dot x|^2-|x|^2)\,dt
\end{equation}
and for every $l\ge1$,
\begin{equation}\label{eq:derivative-identity}
\E_l(C)=\int_0^{2\pi}|x^{(l+1)}+x^{(l-1)}|^2\,dt.
\end{equation}
\end{lemma}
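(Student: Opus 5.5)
The plan is to compute the two integrals in $\D$ using the divergence theorem on $M$, applied to the position vector field. The key identities are
\begin{equation*}
\Delta_M\tfrac12|x|^2=|\nabla_M x|^2+x\cdot\Delta_M x=2+x\cdot H,
\end{equation*}
since $|\nabla_M x|^2=\operatorname{tr}(\text{projection onto }TM)=2$, and $\Div_M x^T=2+x\cdot H$. We use $H=\Delta_M x$ and the fact that $\Delta_M x$ is normal to $M$, with $x^T$ the tangential part of $x$. Integrating over $M$ gives
\begin{equation*}
2A+\int_M x\cdot H\,dA=\int_C x\cdot\nu\,ds .
\end{equation*}
This holds whether or not $C$ is connected.

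Next we expand $|x-R\nu|^2=|x|^2-2R\,x\cdot\nu+R^2$, since $\nu$ is a unit vector. This gives
\begin{equation*}
\frac1R\int_C|x-R\nu|^2ds=\frac1R\int_C|x|^2ds-2\int_C x\cdot\nu\,ds+RL .
\end{equation*}
Substituting $\int_C x\cdot\nu\,ds=2A+\int_M x\cdot H\,dA$, the mean curvature terms cancel against $-2\int_M x\cdot H\,dA$. Using $L^2/\pi=2RL$, we get
\begin{equation*}
\D=2RL-4A-\frac1R\int_C|x|^2ds+4A+2\int_M x\cdot H\,dA-RL-2\int_M x\cdot H\,dA .
\end{equation*}
This simplifies to \eqref{eq:boundary-identity}. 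Independence of $M$ is then immediate, since the right-hand side involves only $C$.

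For connected $C$, we use the arclength parametrization by $t=s/R$. Here $|\dot x|=R$ and $ds=R\,dt$, so $RL=2\pi R^2=\int_0^{2\pi}|\dot x|^2dt$ and $\frac1R\int_C|x|^2ds=\int_0^{2\pi}|x|^2dt$. This yields \eqref{eq:boundary-identity-integral}.

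For \eqref{eq:derivative-identity}, note that $\frac{d}{ds}=\frac1R\frac{d}{dt}$. Hence $R^2\kappa=\ddot x$, and
\begin{equation*}
\left(\frac{d}{ds}\right)^{l-1}(x+R^2\kappa)=R^{-(l-1)}\bigl(x^{(l-1)}+x^{(l+1)}\bigr).
\end{equation*}
Squaring gives the factor $R^{-2l+2}$, and together with $ds=R\,dt$ the total power of $R$ is $R^{2l-3}\cdot R^{-2l+2}\cdot R=1$.

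The only step needing care is the first one. There we must justify $|\nabla_Mx|^2=2$ and that $\Delta_M x$ is normal, and check the sign convention for the outward conormal $\nu$ in the divergence theorem. Everything after that is bookkeeping.
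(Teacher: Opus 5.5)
Your proof is correct and follows the paper's argument. The paper gets $2A=\int_C x\cdot\nu\,ds-\int_M x\cdot H\,dA$ from the divergence theorem using $\Div_M x=\Div_M x^T-x\cdot H$; this is the same identity as your $\Delta_M\tfrac12|x|^2$ computation, since $\nabla_M\tfrac12|x|^2=x^T$. The paper then expands $|x-R\nu|^2$ and converts to the parameter $t$ via $ds=R\,dt$ and $\ddot x=R^2\kappa$, exactly as you do.
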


\begin{proof}
For an ambient vector field $\phi$ along $M$, let
$$
\Div_M\phi=\sum_{i=1}^{2}
\nabla^{\R^n}_{e_i}\phi\cdot e_i,
$$
where $e_1,e_2$ is a local orthonormal frame of $TM$.
Then $\Div_M\phi=\Div_M(\phi^T)-\phi\cdot H$.
Applying this to $\phi=x$ and using the divergence theorem gives
\begin{equation*}
2A=\int_Cx\cdot\nu\,ds-\int_Mx\cdot H\,dA.
\end{equation*}
Expanding $|x-R\nu|^2$ and using $L=2\pi R$, we obtain \eqref{eq:boundary-identity}.

If $C$ is connected, then $|\dot x|=R$ and
$ds=R\,dt$, so
$$
RL-\frac1R\int_C|x|^2\,ds
=\int_0^{2\pi}(|\dot x|^2-|x|^2)\,dt.
$$
This proves \eqref{eq:boundary-identity-integral}.

Finally, $\ddot x=R^2\kappa$ and hence
$$
x^{(l+1)}+x^{(l-1)}
=R^{l-1}\left(\frac{d}{ds}\right)^{l-1}
(x+R^2\kappa).
$$
Compared with \eqref{eq:curvature-energies},
this proves \eqref{eq:derivative-identity}.
\end{proof}

\subsection{Isoperimetric inequalities for connected boundaries}
\begin{theorem}[Higher order isoperimetric inequalities]\label{thm:main}
Suppose $M$ and $C$ satisfy \hyperref[ass:A]{\textbf{Assumption A}}. Then $\D(C, M)\ge0$, or equivalently,
\begin{equation}\label{eq:first-order-expanded}
\frac{L^2-4\pi A}{\pi}
\ge\frac{2\pi}{L}\int_C \left|x-\frac{L}{2\pi}\nu\right|^2 d s +2\int_Mx\cdot H d A.
\end{equation}
For every integer $m\ge2$,
\begin{equation*}
\sum_{l=1}^{m-1}s_{m, l}\E_l(C)+P_m(1)\D(C, M)\ge0.
\end{equation*}
Equivalently,
\begin{equation}\label{eq:hierarchy-expanded}
\begin{aligned}
0\le{}& \sum_{l=1}^{m-1}s_{m, l}R^{2l-3}\int_C \left|\left(\frac{ d }{ d s}\right)^{l-1} (x+R^2\kappa)\right|^2 d s\\
& -\frac{(-1)^m}{2}(m-1)!(m+1)! \left(\frac{L^2-4\pi A}{\pi} -\frac1R\int_C|x-R\nu|^2 d s -2\int_Mx\cdot H d A\right).
\end{aligned}
\end{equation}
Equality in \eqref{eq:first-order-expanded} holds precisely when
$x(t)=C_1\cos t+D_1\sin t$.
For $m\ge2$, equality in \eqref{eq:hierarchy-expanded} holds precisely when
\begin{equation}\label{eq:equality-fourier}
x(t)=\sum_{k=1}^{m}(C_k\cos kt+D_k\sin kt), \qquad C_k, D_k\in\R^n,
\end{equation}
where $t=\frac{2\pi s}{L}$.
\end{theorem}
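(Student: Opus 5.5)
The plan is to reduce everything to Proposition \ref{prop:wirtinger} applied to the boundary parametrization $f(t)=x(t)$, using Lemma \ref{lem:boundary-identity} to convert the geometric quantities into Fourier-analytic ones. Under Assumption A the curve $C$ is a smooth simple closed curve of length $L=2\pi R$. Reparametrizing by $t=s/R$ therefore gives a smooth $2\pi$-periodic map $x:\R\to\R^n$ with $|\dot x|=R$. The centroid condition $\int_C x\,ds=0$ becomes $\int_0^{2\pi}x\,dt=0$, since $ds=R\,dt$. Thus $f=x$ is $C^\infty$, periodic, and of zero mean, so both parts of Proposition \ref{prop:wirtinger} apply.

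First order: by \eqref{eq:boundary-identity-integral}, $\D(C,M)=\int_0^{2\pi}(|\dot x|^2-|x|^2)\,dt$, and \eqref{eq:wirtinger-first} gives $\D\ge0$. Equality holds if and only if $x=C_1\cos t+D_1\sin t$. To see that $\D\ge0$ is equivalent to \eqref{eq:first-order-expanded}, I will substitute $R=L/(2\pi)$ into Definition \ref{def:deficit}, so that $\frac1R=\frac{2\pi}{L}$, and move the two integral terms to the right-hand side.

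Higher order: for $m\ge2$, by \eqref{eq:derivative-identity}, $\E_l(C)=\int_0^{2\pi}|x^{(l+1)}+x^{(l-1)}|^2\,dt$ for each $1\le l\le m-1$. Hence
\[
\sum_{l=1}^{m-1}s_{m,l}\E_l(C)+P_m(1)\D(C,M)
\]
is exactly the left-hand side of \eqref{eq:wirtinger-higher} with $f=x$. Part (2) of Proposition \ref{prop:wirtinger} then gives nonnegativity, with equality if and only if $x$ has the finite expansion \eqref{eq:equality-fourier}. The expanded form \eqref{eq:hierarchy-expanded} follows by inserting the definition of $\E_l$ and the value \eqref{eq:constant}. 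Note that $P_m(1)=-\frac{(-1)^m}{2}(m-1)!(m+1)!$, which accounts for the sign in \eqref{eq:hierarchy-expanded}.

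There is no real obstacle, since Lemma \ref{lem:boundary-identity} has done the geometric work. The only points needing care are bookkeeping. One is the regularity: $x$ is smooth, so it is $C^m$ for every $m$. The other is that the parameter in the equality statement is $t=2\pi s/L=s/R$, which is the same one used in the lemma and the proposition. One might worry whether the constant-speed constraint interferes with the equality characterization. It does not: Proposition \ref{prop:wirtinger} gives an if-and-only-if for arbitrary $f$. The theorem is therefore stated as a condition on the given curve $x$, which already satisfies $|\dot x|=R$, so no additional compatibility needs to be proved here.
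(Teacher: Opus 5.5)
Your proposal is correct and is essentially the paper's own proof. You apply Proposition~\ref{prop:wirtinger} to $f=x$, use Lemma~\ref{lem:boundary-identity} to identify $\D$ and $\E_l$ with the Fourier-side quantities, and take the sign of $P_m(1)$ from \eqref{eq:constant} to get \eqref{eq:hierarchy-expanded}, with the equality cases read off from the proposition.
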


\begin{proof}
Applying Proposition~\ref{prop:wirtinger} to $x$ and using Lemma~\ref{lem:boundary-identity}, we get \eqref{eq:first-order-expanded}. Formulas \eqref{eq:constant}, \eqref{eq:derivative-identity} and inequality \eqref{eq:wirtinger-higher} give \eqref{eq:hierarchy-expanded}. The equality cases follow from Proposition~\ref{prop:wirtinger}.
\end{proof}

Let us write down explicitly the second and third order case.
\begin{corollary}
Suppose $M$ and $C$ satisfy \hyperref[ass:A]{\textbf{Assumption A}}. Then
\begin{equation}\label{eq:second-order}
0\le\D\le\frac13\E_1 =\frac{2\pi}{3L}\int_C|x+R^2\kappa|^2 d s,
\end{equation}
and
\begin{equation}\label{eq:third-order}
\D\ge\frac12\E_1-\frac1{24}\E_2 =\frac{\pi}{L}\int_C|x+R^2\kappa|^2 d s -\frac{L}{48\pi}\int_C \left|\frac{ d }{ d s}(x+R^2\kappa)\right|^2 d s.
\end{equation}
Equality in the upper bound of \eqref{eq:second-order}, or in \eqref{eq:third-order}, is characterized by \eqref{eq:equality-fourier} with $m=2$, or $m=3$, respectively.
\end{corollary}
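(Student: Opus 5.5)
The corollary is the cases $m=2$ and $m=3$ of Theorem \ref{thm:main}, so the plan is to substitute the explicit values from \eqref{eq:first-polynomials} into the hierarchy. The remaining work is to convert $\E_1,\E_2$ into the boundary integrals displayed, using $R=\frac{L}{2\pi}$.

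For $m=2$ we have $S_2(u)=1$, so $s_{2,1}=1$, and $P_2(1)=-3$. Theorem \ref{thm:main} then gives $\E_1-3\D\ge0$, that is, $\D\le\frac13\E_1$. Combined with $\D\ge0$ from the first part of Theorem \ref{thm:main}, this is \eqref{eq:second-order}. Definition \ref{def:deficit} with $l=1$ gives $\E_1=R^{-1}\int_C|x+R^2\kappa|^2\,ds$. Since $\frac{1}{3R}=\frac{2\pi}{3L}$, this produces the stated integral.

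For $m=3$ we have $S_3(u)=u-12$, so $s_{3,1}=-12$ and $s_{3,2}=1$, and $P_3(1)=24$. The hierarchy reads $-12\E_1+\E_2+24\D\ge0$. Dividing by $24$ gives $\D\ge\frac12\E_1-\frac1{24}\E_2$. With $\E_2=R\int_C|\frac{d}{ds}(x+R^2\kappa)|^2\,ds$ we get $\frac{1}{2R}=\frac{\pi}{L}$ and $\frac{R}{24}=\frac{L}{48\pi}$, which is \eqref{eq:third-order}.

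The equality statements follow directly from the equality clause of Theorem \ref{thm:main} with $m=2$ and $m=3$. Dividing by a positive constant does not change when equality holds, so each rearranged inequality is an equality exactly when the original hierarchy inequality is. The only step needing care is the sign bookkeeping for $s_{m,l}$, which I would check by expanding $P_3(u)=(u-4)(u-9)$. This gives $P_3(u)-P_3(1)=u^2-13u+12=(u-1)(u-12)$, confirming $S_3(u)=u-12$.
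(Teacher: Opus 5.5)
Your proposal is correct and is exactly the paper's argument: substitute $S_2(u)=1$, $P_2(1)=-3$, $S_3(u)=u-12$, $P_3(1)=24$ from \eqref{eq:first-polynomials} into the hierarchy of Theorem~\ref{thm:main}, and read off the equality cases from that theorem. Your verification $P_3(u)-P_3(1)=(u-1)(u-12)$ and the conversions $\frac{1}{3R}=\frac{2\pi}{3L}$, $\frac{1}{2R}=\frac{\pi}{L}$, $\frac{R}{24}=\frac{L}{48\pi}$ are all right.
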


\begin{proof}
Substitute \eqref{eq:first-polynomials} into \eqref{eq:hierarchy-expanded}.
\end{proof}

\begin{remark}
Each inequality in the hierarchy provides an upper bound for the nonnegative deficit in the preceding inequality. To make this precise, set
$$
\mathcal B_1=0, \qquad \mathcal B_m=-\frac{1}{P_m(1)} \sum_{l=1}^{m-1}s_{m, l}\E_l \quad (m\ge2).
$$
Since $P_m(1)$ has sign $(-1)^{m-1}$, the inequality of order $m$ can be written as
$$\delta_m:=(-1)^{m-1}(\D-\mathcal B_m)\ge0. $$
Thus $\delta_m$ measures the gap between $\D$ and its order-$m$ bound. The next inequality, $\delta_{m+1}\ge0$, yields
\begin{equation}\label{eq:general-successive-remainder}
0\le\delta_m \le(-1)^{m-1}(\mathcal B_{m+1}-\mathcal B_m).
\end{equation}
Indeed,
$$
\delta_m+\delta_{m+1} =(-1)^{m-1}(\mathcal B_{m+1}-\mathcal B_m).
$$
In particular, equality in the upper bound in \eqref{eq:general-successive-remainder} holds precisely when the inequality of order $m+1$ is an equality.

For example,
$$
\mathcal B_2=\frac13\E_1, \qquad \mathcal B_3=\frac12\E_1-\frac1{24}\E_2,
$$
so the third order inequality gives
$$
0\le\delta_2=\frac13\E_1-\D \le\frac1{24}\E_2-\frac16\E_1.
$$
Substituting $\E_1=3(\D+\delta_2)$ into the right-hand inequality and rearranging gives the equivalent estimate
\begin{equation*}
0\le\frac13\E_1-\D \le\frac1{36}\E_2-\frac13\D.
\end{equation*}
\end{remark}

\begin{remark}
From \eqref{eq:boundary-identity}, $\D$ is determined only by the boundary curve $C$, not the surface $M$, and so is $\E_l$. In particular, equality in Theorem~\ref{thm:main} is independent of the choice of a smooth spanning surface. However, under minimality condition on $M$, we can deduce the rigidity of $M$ in the first order case. See Theorem \ref{thm:first-rigidity}.
\end{remark}
\subsection{Extension to multiple boundaries}\label{subsec:weakly-connected}
We now extend the first order inequality to surfaces with possibly disconnected boundaries, using an argument of Li, Schoen and Yau \cite{LiSchoenYau1984}.

\begin{definition}\label{def:weakly connected}
A compact set $C\subset\mathbb R^n$ is said to be \emph{weakly connected} if there exists an orthogonal coordinate system $(x_1, \ldots, x_n)$ such that, for each $\alpha=1, \ldots, n$, every hyperplane $\{x_\alpha=c\}$ disjoint from $C$ has $C$ entirely on one side.
\end{definition}

In particular, every connected set is weakly connected.

The following theorem generalizes the isoperimetric inequality of Li, Schoen and Yau \cite[Theorem 1]{LiSchoenYau1984}.
\begin{theorem}\label{thm:LSY isoperimetric inequality}
Let $M\subset\mathbb R^n$ be a smooth compact connected embedded surface whose boundary $\partial M$ is weakly connected. Let $L$ be the total length of $\partial M$, and set $R=\frac{L}{2\pi}$. Suppose that the origin is the centroid of $\partial M$. Then
\begin{equation}\label{eq:weakly-connected}
\frac{L^2-4\pi A}{\pi}
\ge
\frac1R\int_{\partial M}|x-R\nu|^2\, ds +2\int_M x\cdot H\, dA.
\end{equation}
Equality holds if and only if $\partial M$ is a circle. In particular, if $M$ is minimal, then
$$
L^2-4\pi A \ge \frac{\pi}{R} \int_{\partial M}|x-R\nu|^2\, ds,
$$
with equality if and only if $M$ is a flat disk.
\end{theorem}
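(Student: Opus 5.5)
By Lemma~\ref{lem:boundary-identity}, whose first part does not need $\partial M$ to be connected, inequality \eqref{eq:weakly-connected} is equivalent to
\begin{equation*}
\int_{\partial M}|x|^2\,ds\le R^2L=\frac{L^3}{4\pi^2}.
\end{equation*}
So the plan is to prove this Poincar\'e-type bound for each coordinate function, following Li--Schoen--Yau. I use the orthogonal coordinates $(x_1,\dots,x_n)$ supplied by weak connectedness. Since both the centroid condition and $|x|^2$ are invariant under rotations, I may assume these are the standard coordinates.

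Fix $\alpha$ and let $f=x_\alpha|_{\partial M}$, which has zero mean on $\partial M$. The claim is
\begin{equation*}
\int_{\partial M}f^2\,ds\le\frac{L^2}{4\pi^2}\int_{\partial M}\Bigl|\frac{df}{ds}\Bigr|^2ds.
\end{equation*}
Summing over $\alpha$ and using $\sum_\alpha|dx_\alpha/ds|^2=1$ then gives the desired bound.

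To prove the claim, I argue as Li--Schoen--Yau do.
\begin{itemize}
\item Weak connectedness says the image $f(\partial M)\subset\R$ has no gaps. If $c$ lies strictly between $\min f$ and $\max f$ and were not attained, the hyperplane $\{x_\alpha=c\}$ would miss $\partial M$ while separating it, a contradiction. So $f(\partial M)=[a,b]$ is an interval.
\item I compare with the symmetric decreasing rearrangement, equivalently a single-loop rearrangement. Let $\mu(c)$ be the length of $\{f>c\}$. For a.e.\ regular value $c\in(a,b)$ the level set contains at least two points, because every component is a closed curve on which $f$ attains both values above and below $c$. Components on which $f$ is constant contribute nothing to the gradient term, and I will handle them by a small argument.
\item Rearranging $f$ into a function $f^*$ on a circle of length $L$ with exactly two preimages per level preserves $\int f^2$ and the mean. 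By the coarea formula and Cauchy--Schwarz (P\'olya--Szeg\H{o}), it does not increase $\int|f'|^2$.
\item The one-dimensional Wirtinger inequality \eqref{eq:wirtinger-first} on the circle of length $L$ then gives the claim.
\end{itemize}
The main obstacle is the rearrangement step. One must check that the level sets have at least two points for almost every $c$; this is exactly where connectedness of the image interval and the closedness of the components are used. One must also handle constant-coordinate components, which are legitimate in the weakly connected setting.

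For the equality case, equality forces equality in Wirtinger for each $f^*$, and equality in the rearrangement step. It also forces every $f$ to have exactly two preimages per level, essentially everywhere. From this I want to conclude that $\partial M$ is connected and that each coordinate is a first Fourier mode along arclength, hence $x(t)=C_1\cos t+D_1\sin t$. Since $|\dot x|=R$ with zero mean, this makes $\partial M$ a circle, as in the first-order rigidity. If $M$ is minimal, then $H=0$, so the inequality reduces to the displayed one. With a circular boundary, the convex hull property puts $M$ in the plane of the circle, so $M$ is the flat disk. Conversely, for a flat disk one checks $\nu=x/R$ and $A=\pi R^2$, so both sides vanish.
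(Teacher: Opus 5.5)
Your reduction to $\int_{\partial M}|x|^2\,ds\le R^2L$ and your proof of the inequality are fine, but they take a different route from the paper. The paper uses the Li--Schoen--Yau translation trick. For each $\alpha$, the components of $\partial M$ are translated perpendicular to $e_\alpha$ into one connected closed curve $\widehat C_\alpha$ of length $L$ on which $x_\alpha$ is unchanged, and Wirtinger is applied on that curve. You instead rearrange $x_\alpha$ onto a circle of length $L$ and invoke P\'olya--Szeg\H{o}. Weak connectedness then enters as ``the image of $x_\alpha$ is an interval''. This gives continuity of $f^*$ and at least two preimages for all but finitely many $c\in(a,b)$. (These preimages come from \emph{some} component $C_i$ with $\min_{C_i}f<c<\max_{C_i}f$, not from every component as you wrote.) Your approach avoids building $\widehat C_\alpha$, but it makes the equality case harder.

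The gap is in the equality case. The sentence ``From this I want to conclude that $\partial M$ is connected'' is the crux, and you give no argument for it. Having exactly two preimages per level does not by itself exclude two components whose $x_\alpha$-ranges abut, say $[a,c_0]$ and $[c_0,b]$. A quantitative input is needed.

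Here is one way to supply it. Pick $\alpha$ with $x_\alpha\not\equiv0$. Equality in Wirtinger gives $f^*(\sigma)=A\cos(2\pi\sigma/L)$ with $A>0$, so $\mu=\mu^*$. Hence $\mu$ is continuous, which rules out constant components and plateaus. Moreover $-\mu'(c)=\frac{L}{\pi\sqrt{A^2-c^2}}$ is bounded near every $c_0\in(-A,A)$. Now suppose $c_0\in(-A,A)$ were the maximum or minimum of $f|_{C_i}$. Then for a.e.\ $c$ near $c_0$ on the appropriate side, $-\mu'(c)\ge 1/|f'(s_c)|$, where $s_c$ is a preimage converging to the extremum point. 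Since $f'$ vanishes at that point, this lower bound blows up, a contradiction. So every component has range exactly $[-A,A]$, and $N(c)=2$ a.e.\ leaves only one component.

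In the paper's route this step is immediate. Equality in Wirtinger on $\widehat C_\alpha$ makes $x_\alpha$ a sinusoid of period $L$ along each $C_i$. That is incompatible with period $L_i<L$ unless $x_\alpha\equiv0$ on $C_i$, and $\alpha$ is arbitrary. Once connectedness is known, you no longer need the rearrangement. \eqref{eq:wirtinger-first} gives $x=C_1\cos t+D_1\sin t$, and Theorem~\ref{thm:first-rigidity} finishes, including the minimal case.
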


\begin{proof}
By Lemma~\ref{lem:boundary-identity}, we have
\begin{align*}
& \frac{L^2-4\pi A}{\pi} -\frac1R\int_{\partial M}|x-R\nu|^2\, ds -2\int_M x\cdot H\, dA\\
& \qquad=RL-\frac1R\int_{\partial M}|x|^2\, ds.
\end{align*}

By the remarkable translation argument of Li, Schoen and Yau \cite[Eqn. (17)]{LiSchoenYau1984}, weak connectedness and the Poincar\'e inequality on a connected closed Lipschitz curve $\widehat C_\alpha$ (which is obtained by translating each component of $\partial M$ in a special way which preserves $x_\alpha$, cf. \cite[p. 240-241]{LiSchoenYau1984}) implies that
$$
\int_{\partial M} x_\alpha^2\, ds
\le \frac{L^2}{4\pi^2} \int_{\partial M} \left|\frac{dx_\alpha}{ds}\right|^2\, ds
$$
for every $\alpha=1, \ldots, n$. Summing over $\alpha$ gives
$$
\int_{\partial M}|x|^2\, ds \le R^2\int_{\partial M} \left|\frac{dx}{ds}\right|^2\, ds =R^2L.
$$
Substituting this into the preceding identity proves \eqref{eq:weakly-connected}.

We now consider equality. Suppose that $\partial M$ has at least two components $C_1, \ldots, C_p$, with lengths $L_i<L$. Equality would require equality in each of the above Poincar\'e inequalities. Recall that, for each $\alpha$, the translation argument of Li, Schoen and Yau produces a connected closed curve $\widehat C_\alpha$ by translating the components $C_i$ in directions perpendicular to the $x_\alpha$-axis, thereby preserving the coordinate function $x_\alpha$. By the equality condition for the Poincar\'e inequality on $\widehat C_\alpha$, the restriction of $x_\alpha$ to each $C_i$ takes the form
$$
x_\alpha(s)=a_{\alpha,i}\cos\frac{2\pi s}{L}+b_{\alpha,i}\sin\frac{2\pi s}{L},
$$
where $s$ is arclength along $C_i$.
Since $x_\alpha$ is periodic with period $L_i<L$, this forces $x_\alpha\equiv0$ on $C_i$ for such $\alpha$. But $\alpha\in \{1,\cdots,n\}$ is arbitrary, which leads to a contradiction. Thus equality requires $\partial M$ to be connected. This then reduces to Theorem \ref{thm:first-rigidity}.
\end{proof}
We can also extend the first-order inequality to surfaces with arbitrarily many boundary components. The resulting inequality involves the isoperimetric deficit together with an additional term, which can be interpreted as the moment of inertia of the boundary-component centroids about the centroid of $\partial M$, with each component weighted by its length.
\begin{theorem}
Let $M\subset\R^n$, $n\ge2$, be a smooth compact embedded surface, not necessarily connected, whose boundary $C=C_1\cup\cdots\cup C_p$ consists of $p$ smooth simple closed curves. Let $L_i$ and $b_i$ denote the length and arclength centroid of $C_i$, respectively, and set $L=\sum_{i=1}^p L_i$ and $R=\frac{L}{2\pi}$. Assume that the origin is the centroid of $C$. Then
\begin{equation*}
\begin{aligned}
\frac{L^2-4\pi A}{\pi}
+\frac1R\sum_{i=1}^p L_i|b_i|^2
\ge {}&
\frac1R\int_C|x-R\nu|^2\, ds\\
& +2\int_M x\cdot H\, dA
+\frac{L^3-\sum_{i=1}^p L_i^3}{2\pi L}.
\end{aligned}
\end{equation*}
Equality holds if and only if every $C_i$ is a circle.
\end{theorem}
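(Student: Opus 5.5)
The plan is to reduce everything to the boundary identity of Lemma~\ref{lem:boundary-identity}, which was stated for possibly disconnected boundaries and does not use connectedness of $M$, and then to apply the first order Wirtinger inequality of Proposition~\ref{prop:wirtinger} separately on each component, centred at that component's own centroid.

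First, by \eqref{eq:boundary-identity},
\begin{equation*}
\frac{L^2-4\pi A}{\pi}-\frac1R\int_C|x-R\nu|^2\,ds-2\int_M x\cdot H\,dA
=RL-\frac1R\sum_{i=1}^p\int_{C_i}|x|^2\,ds .
\end{equation*}
Next, on each $C_i$ I use the parallel axis decomposition
\begin{equation*}
\int_{C_i}|x|^2\,ds=\int_{C_i}|x-b_i|^2\,ds+L_i|b_i|^2 .
\end{equation*}
This holds because $\int_{C_i}(x-b_i)\,ds=0$. Set $R_i=L_i/2\pi$ and $t_i=s/R_i$. Then $f=x-b_i$ is a $2\pi$-periodic, zero-mean function of $t_i$ with $|\dot f|=R_i$. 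Proposition~\ref{prop:wirtinger}(1) gives $\int_0^{2\pi}|f|^2\,dt_i\le 2\pi R_i^2$, that is,
\begin{equation*}
\int_{C_i}|x-b_i|^2\,ds\le R_i^2L_i=\frac{L_i^3}{4\pi^2}.
\end{equation*}

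Substituting these bounds, the right-hand side of the identity is at least
\begin{equation*}
RL-\frac1R\sum_i\frac{L_i^3}{4\pi^2}-\frac1R\sum_iL_i|b_i|^2
=\frac{L^3-\sum_iL_i^3}{2\pi L}-\frac1R\sum_iL_i|b_i|^2 .
\end{equation*}
Here I used $RL=\frac{L^2}{2\pi}$ and $\frac1R\cdot\frac{L_i^3}{4\pi^2}=\frac{L_i^3}{2\pi L}$. Rearranging gives exactly the stated inequality.

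For equality, note that the only inequalities used are the $p$ Wirtinger inequalities, so equality holds if and only if each one is an equality. By Proposition~\ref{prop:wirtinger}(1), this means $x-b_i=C\cos t_i+D\sin t_i$ on each $C_i$. The constant speed condition $|{-C}\sin t_i+D\cos t_i|^2\equiv R_i^2$ forces $|C|=|D|=R_i$ and $C\cdot D=0$, so each $C_i$ is a round circle of radius $R_i$ centred at $b_i$. Conversely, if every $C_i$ is a circle, then each Wirtinger inequality is an equality, and hence so is the stated inequality. I expect no real obstacle here. The only point to check is that Lemma~\ref{lem:boundary-identity} indeed applies without connectedness of $M$ or $C$, which it does, since its proof is just the divergence theorem applied to $x$.
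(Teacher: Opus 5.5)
Your proposal is correct and follows essentially the same route as the paper: you apply the boundary identity of Lemma~\ref{lem:boundary-identity} and the parallel-axis splitting $\int_{C_i}|x|^2\,ds=\int_{C_i}|x-b_i|^2\,ds+L_i|b_i|^2$. You then use the first order Wirtinger inequality on each component, with equality exactly when every component is a circle.
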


\begin{proof}
By Wirtinger's inequality applied to \(x-b_i\) on each \(C_i\), where
\(\int_{C_i}(x-b_i)\,ds=0\) by definition of \(b_i\),
\begin{align*}
\int_C|x|^2\, ds
 =\sum_{i=1}^p\left( \int_{C_i}|x-b_i|^2\, ds+L_i|b_i|^2 \right)
& \le\sum_{i=1}^p\frac{L_i^2}{4\pi^2} \int_{C_i}|\nabla x|^2\, ds +\sum_{i=1}^pL_i|b_i|^2\\
& =\frac{1}{4 \pi^2} \sum_{i=1}^p L_i^3+\sum_{i=1}^p L_i\left|b_i\right|^2 .
\end{align*}
Substituting this into the identity
\begin{align*}
\frac{L^2-4\pi A}{\pi} -\frac1R\int_C|x-R\nu|^2\, ds -2\int_Mx\cdot H\, dA =RL-\frac1R\int_C|x|^2\, ds
\end{align*}
proves the inequality. Equality holds precisely when equality holds in Wirtinger's inequality on every $C_i$, which is equivalent to each $C_i$ being a circle.
\end{proof}
\section{Equality at the first and second orders}\label{sec:low-rigidity}

\subsection{First order equality }
\begin{theorem} [First order rigidity]\label{thm:first-rigidity}
Equality holds in \eqref{eq:first-order-expanded} if and only if $C$ is a planar circle centered at the origin. If $M$ is minimal, equality holds if and only if $M$ is a flat disk centered at the origin.
\end{theorem}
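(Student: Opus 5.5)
By Theorem~\ref{thm:main}, equality in \eqref{eq:first-order-expanded} holds exactly when $x(t)=C_1\cos t+D_1\sin t$ with $C_1,D_1\in\R^n$, where $t=s/R$ is the normalized arclength parameter. The plan has two parts. First, I will turn this Fourier condition into the geometric statement that $C$ is a round circle, using the constant-speed constraint $|\dot x|\equiv R$. Second, when $M$ is minimal, I will identify $M$ from its boundary.

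For the first part, differentiate to get $\dot x=-C_1\sin t+D_1\cos t$. Then
\[
|\dot x|^2=\tfrac12(|C_1|^2+|D_1|^2)+\tfrac12(|D_1|^2-|C_1|^2)\cos 2t-C_1\cdot D_1\sin 2t .
\]
Requiring this to equal $R^2$ for all $t$ forces $|C_1|=|D_1|=R$ and $C_1\cdot D_1=0$. Hence $x(t)=R(e_1\cos t+e_2\sin t)$ with $e_1,e_2$ orthonormal, which is a planar circle of radius $R$ centered at the origin. The converse is immediate: such a circle, parametrized by $t=s/R$, has exactly this form, so Theorem~\ref{thm:main} gives equality. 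Note that the centroid normalization in Assumption~\ref{ass:A} is automatic here.

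For the second part, suppose $M$ is minimal, so $H=\Delta_M x=0$ and every coordinate function $x_\alpha$ is harmonic on $M$. Extend $e_1,e_2$ to an orthonormal basis $e_1,\dots,e_n$. For $\alpha\ge3$, the function $x\cdot e_\alpha$ is harmonic on the compact surface $M$ and vanishes on $\partial M=C$, so by the maximum principle it vanishes identically. Thus $M$ lies in the plane $\Span\{e_1,e_2\}$. Since $M$ is a compact connected embedded surface in this plane with boundary the circle $C$, it must be the closed disk bounded by $C$. To justify this, I would argue that $M$ is a compact $2$-manifold with boundary sitting inside $\R^2$, so its interior is an open subset of the plane. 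That interior is also closed in $\R^2\setminus C$, so it is a union of components of $\R^2\setminus C$. Compactness excludes the unbounded component, leaving exactly the open disk.

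Conversely, a flat disk centered at the origin is minimal and its boundary is such a circle, so equality holds by the first part. Alternatively, one can check directly that on a flat disk $\nu=x/R$, so $\int_C|x-R\nu|^2\,ds=0$, and $L^2=4\pi A$.

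The main obstacle, though mild, is this topological step identifying $M$ with the disk once $M$ is known to be planar. The rest reduces to the Fourier computation and the maximum principle.
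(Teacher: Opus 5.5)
Your proposal is correct and follows the paper's proof essentially step for step: the Fourier form from Theorem~\ref{thm:main} together with the constant-speed identity forces $|C_1|=|D_1|=R$ and $C_1\cdot D_1=0$, and in the minimal case the maximum principle, applied to the coordinates normal to the plane of $C$, places $M$ in that plane. Your open-and-closed argument, which identifies a compact planar surface bounded by the circle with the closed disk, usefully justifies the final step that the paper states without comment.
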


\begin{proof}
In the equality case, Theorem~\ref{thm:main} gives
$x(t)=C_1\cos t+D_1\sin t$. The constant-speed condition is
$$
R^2=|C_1|^2\sin^2t+|D_1|^2\cos^2t
-2C_1\cdot D_1\sin t\cos t.
$$
Its nonconstant Fourier coefficients vanish, so $|C_1|=|D_1|=R$ and $C_1\cdot D_1=0$. Thus $C$ is a circle of radius $R$. Conversely, its constant-speed parametrization contains only the first Fourier mode and therefore gives equality.

Now assume that $H=0$, and let $V$ be the plane containing $C$. Choose orthonormal coordinates such that $V=\{x_3=\cdots=x_n=0\}$. Since $\Delta_M x=H=0$, each coordinate function $x_\alpha$, $\alpha=3, \ldots, n$, satisfies $\Delta_M x_\alpha=0$ on $M$ and $x_\alpha=0$ on $\partial M=C$. The maximum principle gives $x_\alpha\equiv0$ on $M$, so $M\subset V$ and $\partial M$ is a circle. Therefore $M$ is a flat disk.
\end{proof}

Let us single out the first order isoperimetric inequality in the case where $M$ is minimal.
\begin{corollary}\label{cor:minimal}
If $M$ is minimal and $\partial M$ is connected, then
\begin{equation*}
L^2-4\pi A\ge\frac{2\pi^2}{L}\int_C|x-R\nu|^2 d s\ge0.
\end{equation*}
In particular, $L^2=4\pi A$ if and only if $M$ is a flat disk.
\end{corollary}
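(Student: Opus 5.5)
The plan is to specialize Theorem~\ref{thm:main} and Theorem~\ref{thm:first-rigidity} to the case $H=0$; no new estimate is needed. If $M$ is minimal, then $H=\Delta_M x=0$, so the term $2\int_M x\cdot H\,dA$ in \eqref{eq:first-order-expanded} vanishes. Since $\partial M$ is connected we are under Assumption~\ref{ass:A}, after translating the origin to the centroid of $C$, which Corollary~\ref{cor:minimal} implicitly assumes. Then \eqref{eq:first-order-expanded} reads
\[
\frac{L^2-4\pi A}{\pi}\ge\frac{2\pi}{L}\int_C|x-R\nu|^2\,ds .
\]
Multiplying by $\pi$ gives
\[
L^2-4\pi A\ge\frac{2\pi^2}{L}\int_C|x-R\nu|^2\,ds .
\]
The final inequality $\ge0$ is trivial, since the integrand is nonnegative.

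For the equality statement, first suppose $L^2=4\pi A$. The chain then forces $\int_C|x-R\nu|^2\,ds=0$. This means equality holds in \eqref{eq:first-order-expanded}, because the $H$-term is already zero. Theorem~\ref{thm:first-rigidity}, applied in the minimal case, then shows that $M$ is a flat disk centered at the origin.

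Conversely, suppose $M$ is a flat disk of radius $r$. Then $L=2\pi r$ and $A=\pi r^2$, so $L^2=4\pi A$ holds directly. One can also see this from the inequality: the conormal is $\nu=x/r$ and $R=r$, so $x-R\nu=0$ on $C$.

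The only point needing care is that equality $L^2=4\pi A$ really reduces to equality in the first-order inequality. This follows because both intermediate quantities are squeezed to zero. There is no genuine obstacle beyond correctly invoking the earlier rigidity theorem.
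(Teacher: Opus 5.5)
Your proposal is correct and is essentially the paper's argument. The paper gives no separate proof; the corollary is obtained exactly as you do, by setting $H=0$ in \eqref{eq:first-order-expanded} and multiplying by $\pi$, then squeezing to zero and invoking the minimal case of Theorem~\ref{thm:first-rigidity} for equality. You are also right that the centroid normalization of Assumption~\ref{ass:A} is implicitly required.
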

This recovers the classical rigidity in Carleman's isoperimetric inequality; the conormal term gives the sharpened form associated with Chakerian. The relation with the planar inequalities of Sachs and Chakerian is discussed in \cite{kwong2021higher}.

\subsection{Second order equality}
At the second order, the constant-speed condition forces the two Fourier modes to lie in orthogonal planes.
\begin{theorem}[Second order equality]\label{thm:second-rigidity}
Equality holds in the upper bound of \eqref{eq:second-order} if and only if, up to an orthogonal transformation, the normalized arclength parametrization has the form
\begin{equation}\label{eq:second-normal-form}
x(t)=a(e_1\cos t+e_2\sin t)
+b(e_3\cos2t+e_4\sin2t),
\end{equation}
where $a>0$, $b\ge0$, and
\begin{equation}\label{eq:second-speed}
a^2+4b^2=R^2.
\end{equation}
If $b>0$, the four vectors $e_1, e_2, e_3, e_4$ are orthonormal. If $b=0$, the second summand is omitted and only $e_1, e_2$ are needed. Consequently, $C$ lies in a linear subspace of dimension at most four, and for $n\le3$ it is a planar circle.
\end{theorem}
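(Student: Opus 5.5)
By Theorem~\ref{thm:main} with $m=2$, equality in the upper bound of \eqref{eq:second-order} holds exactly when $x(t)=C_1\cos t+D_1\sin t+C_2\cos2t+D_2\sin2t$. The converse direction is immediate: the normal form \eqref{eq:second-normal-form} has zero mean, speed $\sqrt{a^2+4b^2}=R$ by orthonormality, and contains only the first two Fourier modes. So the work is to show that the constant-speed condition $|\dot x|^2\equiv R^2$ forces this normal form.

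I will use complex notation. Set $z=e^{it}$ and write
\[
\dot x = u z + \bar u\bar z + v z^2+\bar v\bar z^2,\qquad u=\tfrac{i}{2}(C_1 - iD_1)\cdot(\text{const}),\quad v\in\C^n,
\]
where $u,v$ are complex vectors with $u$ built from the first mode and $v$ from the second, the latter including the factor $2$ coming from differentiation. Expanding $|\dot x|^2=\dot x\cdot\dot x$ as a Laurent polynomial in $z$ and requiring all nonconstant coefficients to vanish gives:
\begin{itemize}
\item the $z^4$ coefficient $v\cdot v=0$;
\item the $z^3$ coefficient $u\cdot v=0$;
\item the $z^2$ coefficient $u\cdot u+2\,\bar u\cdot v$-type cross term $=0$; more precisely $u\cdot u$ plus the pairing of $\bar u$ with $v$ from $\bar u\bar z\cdot vz^2$, which lands at $z^1$, so I will recompute the bookkeeping carefully;
\item the $z^1$ coefficient $2\,\bar u\cdot v=0$ (bilinear, no conjugation on the dot product).
\end{itemize}
With correct bookkeeping the conditions are $v\cdot v=0$ (from $z^4$), $u\cdot v=0$ (from $z^3$), $u\cdot u=0$ (from $z^2$), and $\bar u\cdot v=0$ (from $z^1$). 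The constant term then gives $2|u|^2+2|v|^2=R^2$ with the Hermitian norm.

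Next I will translate these into real geometry. Write $u=\alpha+i\beta$. The condition $u\cdot u=0$ means $|\alpha|=|\beta|$ and $\alpha\perp\beta$, so the first mode is a round circle in the plane $\Span\{\alpha,\beta\}$. Similarly, $v\cdot v=0$ makes the second mode a round circle in the plane of $\Re v$ and $\Im v$. The conditions $u\cdot v=0$ and $\bar u\cdot v=0$ together give $u\cdot v=\bar u\cdot v=0$, hence $\alpha\cdot v=\beta\cdot v=0$. Taking real and imaginary parts, $\Re v$ and $\Im v$ are orthogonal to both $\alpha$ and $\beta$, so the two planes are orthogonal. If $u=0$, then $x$ consists only of the second mode, and $\int_C$ of it is a doubly-covered circle, which is not simple. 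Hence $a>0$. Choosing orthonormal $e_1,\dots,e_4$ adapted to these planes (with a rotation of $t$ absorbed into orientation choices and an orthogonal transformation) yields \eqref{eq:second-normal-form}, and the constant term gives \eqref{eq:second-speed}.

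Finally, the dimension count is immediate. If $n\le3$, four orthonormal vectors cannot exist, so $b=0$ and $C$ is a planar circle. The main obstacle I expect is the careful Fourier bookkeeping of the cross terms. In particular, I need to check that the $z^1$ and $z^3$ conditions really yield both $u\cdot v=0$ and $\bar u\cdot v=0$, since the orthogonality of the two planes depends on getting both. I also need a small phase adjustment, a shift in $t$ being unavailable because the parametrization is tied to arclength origin, so I will instead absorb phases via the choice of $e_3,e_4$.
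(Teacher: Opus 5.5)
Your proposal is correct and follows essentially the same route as the paper: you expand the constant-speed identity $|\dot x|^2\equiv R^2$ frequency by frequency. Your final conditions $v\cdot v=0$, $u\cdot v=0$, $u\cdot u=0$, $\bar u\cdot v=0$ (from the $z^4,z^3,z^2,z^1$ coefficients) are the complex-bilinear version of the paper's real conditions on $U,V,W,Z$, and the double-covering argument for $a>0$ and the dimension count for $n\le3$ are the same as the paper's. One simplification: no phase adjustment is needed, since $e_1=C_1/a$, $e_2=D_1/a$, $e_3=C_2/b$, $e_4=D_2/b$ are already orthonormal and give the normal form directly.
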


\begin{proof}
By Theorem~\ref{thm:main}, equality gives
$$x(t)=C_1\cos t+D_1\sin t+C_2\cos2t+D_2\sin2t. $$
For convenience put $U=-C_1$, $V=D_1$, $W=-2C_2$, and $Z=2D_2$. Then
$$\dot x(t)=U\sin t+V\cos t+W\sin2t+Z\cos2t. $$
The frequency-two and frequency-four coefficients in the identity $|\dot x(t)|^2=R^2$ give, respectively,
$$
|U|=|V|, \quad U\cdot V=0, \qquad |W|=|Z|, \quad W\cdot Z=0.
$$
The mixed terms have frequencies one and three. Their vanishing gives
\begin{align*}
U\cdot W+V\cdot Z& =0, & -U\cdot W+V\cdot Z& =0, \\
-U\cdot Z+V\cdot W& =0, & U\cdot Z+V\cdot W& =0.
\end{align*}
Thus every inner product between $\{U, V\}$ and $\{W, Z\}$ is zero. Equivalently,
\begin{align*}
|C_1|=|D_1|=a, \qquad |C_2|=|D_2|=b, \\
C_1\perp D_1, \qquad C_2\perp D_2, \qquad \Span\{C_1, D_1\}\perp\Span\{C_2, D_2\}.
\end{align*}

Choosing orthonormal bases in the nonzero planes gives \eqref{eq:second-normal-form}, and the constant term in the speed identity gives \eqref{eq:second-speed}.

If $a=0$, the nonconstant curve satisfies $x(t+\pi)=x(t)$ and traverses its image twice on $[0, 2\pi]$, contradicting the simple closed parametrization. Hence $a>0$. If $n\le3$, two orthogonal nonzero two-planes cannot occur, so $b=0$.

Conversely, \eqref{eq:second-normal-form} has constant speed $R$, has zero mean, and contains only the first two Fourier modes. Its projection onto $\Span\{e_1, e_2\}$ is a once-traversed circle, so it is simple. Therefore it satisfies the equality condition in Theorem~\ref{thm:main} for $m=2$.
\end{proof}

\begin{corollary}\label{cor:second-minimal}
If $n=3$ and $M$ is minimal, equality in the upper bound of \eqref{eq:second-order} holds if and only if $M$ is a flat disk.
\end{corollary}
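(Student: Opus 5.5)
The plan is to combine the second order equality classification, Theorem~\ref{thm:second-rigidity}, with a maximum principle argument like the one in the proof of Theorem~\ref{thm:first-rigidity}.

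The converse direction is immediate. A flat disk centered at the centroid has a round boundary circle. Its constant-speed parametrization contains only the first Fourier mode, so it is of the form \eqref{eq:equality-fourier} with $m=2$. Theorem~\ref{thm:main} then gives equality in the upper bound of \eqref{eq:second-order}.

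For the forward direction, assume equality holds. Theorem~\ref{thm:second-rigidity} puts the boundary in the normal form \eqref{eq:second-normal-form}. When $b>0$ this form requires four orthonormal vectors $e_1,\dots,e_4$. That is impossible in $\R^3$, so $b=0$. Then $a=R$ by \eqref{eq:second-speed}, and
\begin{equation*}
x(t)=R(e_1\cos t+e_2\sin t).
\end{equation*}
Thus $C$ is a planar circle of radius $R$ centered at the origin. It lies in the plane $V=\Span\{e_1,e_2\}$.

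We next show that $M$ lies in $V$. Choose coordinates with $V=\{x_3=0\}$. Minimality gives $\Delta_M x_3=H\cdot e_3=0$ on $M$, and $x_3=0$ on $\partial M=C$. The maximum principle on the compact connected surface $M$ then gives $x_3\equiv0$, so $M\subset V$.

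It remains to identify $M$ inside the plane. $M$ is a compact connected embedded surface in $V$ whose boundary is the circle $C$. By the Jordan curve theorem, the only such compact subsurface of the plane is the closed round disk bounded by $C$. Hence $M$ is a flat disk centered at the origin.

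The only point that needs care is that the case $b>0$ cannot occur in dimension three. This is just a dimension count on the orthonormal frame in Theorem~\ref{thm:second-rigidity}, so I do not expect a genuine obstacle. The rest of the argument repeats the proof of Theorem~\ref{thm:first-rigidity}.
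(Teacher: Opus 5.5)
Your proposal is correct and follows essentially the same route as the paper: Theorem~\ref{thm:second-rigidity} forces $b=0$ in $\R^3$, so $C$ is a planar circle, and the maximum principle argument from Theorem~\ref{thm:first-rigidity} then confines $M$ to that plane. Your Jordan curve step spells out the identification of $M$ with the disk, which the paper leaves implicit. Your converse via Theorem~\ref{thm:main} is equivalent to the paper's observation that $\D=\E_1=0$ for a round circle.
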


\begin{proof}
Theorem~\ref{thm:second-rigidity} implies $C$ is a planar circle. The argument in the proof of Theorem ~\ref{thm:first-rigidity} then implies $M$ is a flat disk. The converse follows from $\D=\E_1=0$.
\end{proof}

\begin{example}[A non-round equality case on a minimal surface]
In $\R^4\cong\C^2$, consider the graph of the holomorphic function $z^2$:
$$
M=\{(z, z^2):|z|\le1\}, \qquad
x(t)=(\cos t, \sin t, \cos2t, \sin2t).
$$
The parametrization $X(u, v)=(u, v, u^2-v^2, 2uv)$ is harmonic and conformal, with induced metric
$$(1+4(u^2+v^2))(d u^2+ d v^2). $$
It is an embedding, and hence parametrizes a smooth minimal disk. Direct integration gives
$$
R=\sqrt5, \qquad L=2\pi\sqrt5, \qquad A=3\pi, \qquad L^2-4\pi A=8\pi^2.
$$
Along the boundary,
\begin{align*}
\kappa& =-\frac15(\cos t, \sin t, 4\cos2t, 4\sin2t), \\
\nu& =\frac1{\sqrt5}(\cos t, \sin t, 2\cos2t, 2\sin2t).
\end{align*}
Thus $|x-R\nu|^2=1$ and $|x+R^2\kappa|^2=9$, from which
$$\D=8\pi-2\pi=6\pi, \qquad \E_1=18\pi. $$
This verifies equality $\D=\frac{\E_1}{3}$ directly. The curve is not planar, as its two nonzero circular modes span orthogonal two-planes. Thus the dimensional restriction in Corollary~\ref{cor:second-minimal} is essential.
\end{example}

\section{Algebraic loops and higher order equality}\label{sec:loops}

\subsection{Failure of an orthogonal Fourier decomposition}
The first- and second-order classification do not extend to higher order. Although equality still implies a finite Fourier expansion (Theorem~\ref{thm:main}), different Fourier modes need not lie in mutually orthogonal planes, as the following example shows.

\begin{example}[A non-trivial third order equality curve in $\R^3$]\label{ex:third-order}
Consider the $2\pi$-periodic curve
\begin{equation*}
x(t)=\left(4\sin t, \,
-2\cos t+\cos2t+\frac23\cos3t, \,
-2\sin t+\sin2t+\frac23\sin3t\right).
\end{equation*}
Its velocity is
\begin{equation*}
\dot x(t)=2(2\cos t, \,
\sin t-\sin2t-\sin3t, \,
-\cos t+\cos2t+\cos3t).
\end{equation*}
The sum of the squares of the last two components inside the parentheses is
$|-e^{it}+e^{2it}+e^{3it}|^2 =3-2\cos2t=5-4\cos^2t$.
Consequently $|\dot x|^2=20$, so $R=\sqrt{20}$ and $L=4\pi\sqrt5$. The mean of $x$ is zero.

To check simplicity, suppose $x(t)=x(u)$, with parameters $\textrm{mod }2\pi$. The first coordinate gives either $u=t$ or $u=\pi-t$. In the second case, writing $c=\cos t$, the difference of the second coordinates is
$$x_2(t)-x_2(\pi-t)=\frac83c(2c^2-3). $$
Since $|c|\le1$, this can vanish only when $c=0$, in which case $\pi-t=t$ modulo $2\pi$. Thus $x$ is injective on $\R/ 2\pi\mathbb Z $. Therefore $x$ is a smooth embedded closed curve.
The curve also bounds a smooth embedded disk. Indeed, its planar projection $\gamma(t)=(x_1(t), x_2(t))$ is injective by the preceding argument and is regular. Let $D\subset\R^2$ be the domain enclosed by $\gamma$. The boundary height function $h(\gamma(t))=x_3(t)$ extends smoothly to $\overline D$. Its graph is then a smooth embedded disk with boundary $C$.

The curve has nonzero Fourier modes of orders one, two, and three. Since $\R^3$ cannot contain two mutually orthogonal two-dimensional subspaces, these modes cannot be supported in mutually orthogonal two-planes. Thus the orthogonal decomposition obtained at the second order does not extend to the third order.

Every smooth compact spanning surface $M$ with boundary this curve satisfies equality in \eqref{eq:third-order}, by Theorem~\ref{thm:main}. Indeed, direct calculation gives
$$
\D=\frac{118\pi}{9}, \qquad \E_1=\frac{674\pi}{9}, \qquad \E_2=584\pi,
$$
and hence $\D=\frac{\E_1}{2} -\frac{E_2}{24}$. In particular, this equality occurs in $\R^3$ without an orthogonal circular-mode decomposition. Figure~\ref{fig:third-curve} shows the curve.

Indeed, the curve in this example is constructed from the multiplicative
factorization in Theorem \ref{thm:general-rigidity}. Let
$$
V_1=\Span\{e_3, e_2\}, \qquad V_2=\Span\{e_1, e_2\},
$$
with orientations given by the ordered bases, and set $\tau=\frac{1}{\sqrt5}(2e_1+e_3)$.
Then its unit tangent $T(t)$ is obtained by
$$ T(t)=\mathcal R_{V_1}(t)^2\mathcal R_{V_2}(t)\tau =\mathcal R_{V_1}(2t)\mathcal R_{V_2}(t)\tau. $$
\end{example}
\begin{figure}[htbp]
\centering
\begin{tikzpicture}[scale=0.7]
\begin{axis}[
width=10cm, height=7cm,
view={35}{25},
xlabel={$x_1$}, ylabel={$x_2$}, zlabel={$x_3$},
tick label style={font=\small},
label style={font=\small},
axis lines=box,
grid=major,
grid style={black!15},
xtick={-4,0,4},
ytick={-2,0,2},
ztick={-3,0,3},
colormap/viridis,
enlargelimits=0.08
]
\addplot3[
very thick,
samples=401,
domain=0:360,
samples y=0,
point meta=z,
shader=interp,
mesh
]
({4*sin(x)},
{-2*cos(x)+cos(2*x)+(2/3)*cos(3*x)},
{-2*sin(x)+sin(2*x)+(2/3)*sin(3*x)});
\end{axis}
\end{tikzpicture}
\caption{The curve in Example~\ref{ex:third-order}.}
\label{fig:third-curve}
\end{figure}

\subsection{Unit algebraic loops }

The equality cases can be studied through the theory of algebraic loops on spheres. To explain this connection, suppose that equality holds in the inequality of order $m$. The equality condition in the higher order Wirtinger inequality gives
$$
x(t)=\sum_{k=1}^{m}
\left(C_k\cos kt+D_k\sin kt\right),
\qquad C_k, D_k\in\R^n.
$$
Differentiating shows that $\dot x(t)$ is also a trigonometric polynomial of degree at most $m$. Moreover, since $t=\frac{2\pi s}{L}$, we have
$$|\dot x(t)|=\frac{L}{2\pi}=R. $$
Thus the normalized tangent
$$T(t):=\frac1R\dot x(t)$$
satisfies $|T(t)|=1$ and has a finite Fourier expansion. In complex notation, we may write
$$
T(t)=\sum_{k=-m}^{m}A_k e^{ikt}
=\sum_{k=-m}^{m}A_k z^k,
\qquad z=e^{it}, \qquad A_k\in\C^n.
$$
Because $T$ is real-valued, its coefficients satisfy $A_{-k}=\overline{A_k}$. Also, $A_0=0$ because $x$ is closed.

Consequently, $T$ defines a map from $\Sph^1$ to $\Sph^{n-1}$ whose coordinate functions are finite Fourier series. Equivalently, regarding $z=e^{i t}$ as the coordinate on $\mathbb{S}^1 \subset \mathbb{C}$, the map $T$ may be viewed as a finite Laurent polynomial with values in $\mathbb{C}^n$. We shall refer to such maps as \emph{algebraic loops on the sphere}.

We shall show that every such loop can be expressed as a product of rotations applied to a fixed unit vector. Each rotation acts in an oriented two-dimensional plane and fixes its orthogonal complement. Applied to the normalized tangent $T$, this provides a description of the equality curves at every order.

This approach is related to the theory of polynomial loops on spheres developed in \cite{BauerCrabb2004}. We give a self-contained proof of the factorization needed here.

\begin{definition}
A \emph{unit algebraic loop} is a map
$T:\R/ 2\pi\mathbb Z \to\Sph^{n-1}$ of the form
\begin{equation*}
T(t)=T(z)=\sum_{k=-r}^{r}A_k z^k,
\qquad z=e^{it}, \qquad A_k\in\C^n,
\end{equation*}
where $A_{-k}=\overline{A_k}$ and $|T(t)|=1$. Its degree is the largest $r$ with $A_r\ne0$.
\end{definition}

On $\C^n$ we define the complex bilinear form $B$ and the Hermitian inner product $\langle\cdot, \cdot\rangle$ by
\begin{equation*}
B(U, W)=\sum_{\alpha=1}^{n}U_\alpha W_\alpha, \qquad \langle U, W\rangle=\sum_{\alpha=1}^{n}U_\alpha\overline{W_\alpha}.
\end{equation*}

Since $T(z)\in\R^n$ is real-valued for $|z|=1$, the bilinear form agrees with the squared Euclidean norm on the unit circle:
$$
B(T(z), T(z)) =\sum_{\alpha=1}^{n}T_\alpha(z)^2 =|T(z)|^2=1, \qquad |z|=1.
$$
Moreover, $B(T(z), T(z))-1$ is a Laurent polynomial that vanishes on the unit circle, so it vanishes identically on $\mathbb C$. Comparing coefficients in
$$
B(T(z), T(z))
=\sum_{j=-2r}^{2r}z^j
\sum_{p+q=j}B(A_p, A_q)=1
$$
therefore gives
\begin{equation*}
\sum_{p+q=j}B(A_p, A_q)=
\begin{cases}1, & j=0, \\
0, & j\ne0. \end{cases}
\end{equation*}

\begin{lemma}\label{lem:highest-coefficients}
If a unit algebraic loop has degree $r>0$, then
\begin{equation}\label{eq:highest-relations}
B(A_r, A_r)=0, \qquad B(A_r, A_{r-1})=0.
\end{equation}
\end{lemma}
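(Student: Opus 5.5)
The plan is to read off the two top-degree coefficient identities from the relation displayed just before the lemma,
\[
\sum_{p+q=j}B(A_p, A_q)=\delta_{j0},
\]
which holds for every integer $j$ with $-2r\le j\le 2r$. Since the loop has degree $r>0$, we have $A_k=0$ for $|k|>r$, and all coefficients $A_k$ with $|k|\le r$ may be nonzero. Both identities come from the two highest frequencies in $B(T(z),T(z))$, namely $j=2r$ and $j=2r-1$. Neither of these equals $0$, because $r\ge1$ gives $2r-1\ge1$.

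The first step takes $j=2r$. Indices with $p+q=2r$ and $|p|,|q|\le r$ must satisfy $p=q=r$. The relation therefore reduces to $B(A_r,A_r)=0$, which is the first identity in \eqref{eq:highest-relations}.

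The second step takes $j=2r-1$. The admissible pairs are now $(p,q)=(r,r-1)$ and $(r-1,r)$. Because $B$ is symmetric, the relation becomes $2B(A_r,A_{r-1})=0$, so $B(A_r,A_{r-1})=0$.

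The only point needing care is the edge case $r=1$. There $A_{r-1}=A_0$, which may or may not be zero; in the setting of tangent loops it vanishes, but nothing in the argument uses this. The counting of index pairs stays the same, so the argument is uniform in $r$, and no step presents a real obstacle.
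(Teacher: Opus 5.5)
Your proposal is correct and follows the same approach as the paper: both read off the coefficients of $z^{2r}$ and $z^{2r-1}$ in the identity $B(T(z),T(z))=1$ and use the symmetry of $B$ to obtain $2B(A_r,A_{r-1})=0$. You additionally spell out why only the index pairs $(r,r)$ and $(r,r-1),(r-1,r)$ contribute, and why $2r-1\neq 0$; the paper leaves both points implicit.
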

\begin{proof}
The coefficient of $z^{2r}$ in $B(T, T)$ is $B(A_r, A_r)$. Since $B(T, T)=1$, this coefficient must vanish. Hence
$$B(A_r, A_r)=0. $$
Similarly, the coefficient of $z^{2r-1}$ is
$$
B(A_r, A_{r-1})+B(A_{r-1}, A_r) = 2B(A_r, A_{r-1}),
$$
which must also vanish. Hence
$$B(A_r, A_{r-1})=0. $$
\end{proof}

\begin{definition}[Isotropic lines and rotation factors]
A complex line $\ell\subset\C^n$ is \emph{isotropic} if $B(w, w)=0$ for some (and hence for all) nonzero $w\in\ell$. Writing $w=u+iv$ with $u, v\in\R^n$, isotropy is equivalent to $0=B(w, w)=|u|^2-|v|^2+2 i u \cdot v$, and hence
$$|u|=|v|, \qquad u\cdot v=0. $$
Thus $\ell$ determines a real two-plane $V_\ell=\Span\{u, v\}$ with the orientation of the ordered pair $(u, v)$. Multiplication of $w$ by a nonzero complex scalar preserves this orientation. Conversely, an oriented plane with orthonormal basis $(e, f)$ determines the isotropic line $\C(e+if)$.

Let $\bar\ell$ be the conjugate line and let $N_\ell=(\ell\oplus\bar\ell)^\perp$, where orthogonality is Hermitian. Isotropy implies that $\ell\perp\bar\ell$, so
$$
\C^n=\ell\oplus N_\ell\oplus\bar\ell
$$
is an orthogonal decomposition. Denote the corresponding Hermitian projections by $\pi_\ell$, $\pi_{N_\ell}$, and $\pi_{\bar\ell}$. Define
\begin{equation*}
\sigma_\ell(t)=z\pi_\ell+\pi_{N_\ell}+z^{-1}\pi_{\bar\ell}, \qquad z=e^{it}.
\end{equation*}
\end{definition}

\begin{lemma}\label{lem:rotation}
The operator $\sigma_\ell(t)$ preserves $\R^n$ and restricts there to the rotation through angle $-t$ in the oriented plane $V_\ell$, extended by the identity on $V_\ell^\perp$. In particular, it is real orthogonal and
\begin{equation*}
\sigma_{\bar\ell}(t)=\sigma_\ell(t)^{-1}.
\end{equation*}
\end{lemma}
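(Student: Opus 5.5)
The plan is to compute $\sigma_\ell(t)$ explicitly on a real orthonormal basis adapted to the decomposition $\C^n=\ell\oplus N_\ell\oplus\bar\ell$. Fix a nonzero $w=u+iv\in\ell$, scaled so that $|u|=|v|=1$; isotropy gives $u\cdot v=0$. Then $e:=u$ and $f:=v$ form an oriented orthonormal basis of $V_\ell$, and
\[
w=e+if,\qquad \bar w=e-if,\qquad \langle w,\bar w\rangle=B(w,w)=0 .
\]
This confirms $\ell\perp\bar\ell$. Moreover $\ell\oplus\bar\ell$ is the complexification $V_\ell\otimes\C$, so $N_\ell$ is the complexification of the real orthogonal complement $V_\ell^\perp$. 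To see this, note that $\langle y,w\rangle=\langle y,\bar w\rangle=0$ is equivalent to $\langle y,e\rangle=\langle y,f\rangle=0$, and for real $y$ this means $y\in V_\ell^\perp$.

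Next decompose a real vector $y\in\R^n$. Write $y=y_V+y_\perp$ with $y_V=ae+bf\in V_\ell$ and $y_\perp\in V_\ell^\perp$. In the complex basis,
\[
y_V=\tfrac{a-ib}{2}\,w+\tfrac{a+ib}{2}\,\bar w,
\]
so $\pi_\ell y=\frac{a-ib}{2}w$, $\pi_{\bar\ell}y=\frac{a+ib}{2}\bar w$, and $\pi_{N_\ell}y=y_\perp$. Applying $\sigma_\ell(t)$ gives
\[
\sigma_\ell(t)y=y_\perp+2\,\mathrm{Re}\Bigl(e^{it}\tfrac{a-ib}{2}(e+if)\Bigr).
\]
Expanding the real part yields
\[
(a\cos t+b\sin t)e+(-a\sin t+b\cos t)f+y_\perp .
\]
This vector is real, so $\sigma_\ell(t)$ preserves $\R^n$. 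Its matrix on the basis $(e,f)$ is $\begin{pmatrix}\cos t&\sin t\\-\sin t&\cos t\end{pmatrix}$, which is rotation by $-t$, and it acts as the identity on $V_\ell^\perp$. A rotation is real orthogonal.

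For the inverse, two observations suffice. First, $\overline{\bar\ell}=\ell$ and $N_{\bar\ell}=N_\ell$, so $\sigma_{\bar\ell}(t)=z\pi_{\bar\ell}+\pi_{N_\ell}+z^{-1}\pi_\ell$. Second, $\pi_\ell,\pi_{N_\ell},\pi_{\bar\ell}$ are complementary orthogonal idempotents, so multiplying the two operators gives $\pi_\ell+\pi_{N_\ell}+\pi_{\bar\ell}=I$. Hence $\sigma_{\bar\ell}(t)=\sigma_\ell(t)^{-1}$.

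Alternatively, $\bar\ell$ corresponds to the basis $(e,-f)$, which is $V_\ell$ with the opposite orientation, so it gives rotation by $+t$. The only care needed is the sign and orientation bookkeeping, together with checking that the Hermitian projections restricted to real vectors are as written; neither presents a genuine obstacle.
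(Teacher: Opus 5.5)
Your proof is correct and follows essentially the paper's route: choose $w=e+if\in\ell$ with $(e,f)$ an oriented orthonormal basis of $V_\ell$, compute the action on $V_\ell$, and note that $\sigma_\ell(t)$ is the identity on $V_\ell^\perp$. The paper computes on $e,f$ via $\sigma_\ell w=zw$ and $\sigma_\ell\bar w=z^{-1}\bar w$, while you compute on a general $ae+bf$ through the projections; for the inverse, your orthogonal-idempotent calculation is a more algebraic version of the paper's observation (which you also give) that $\bar\ell$ yields $V_\ell$ with reversed orientation.
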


\begin{proof}
Choose an ordered orthonormal basis $(e, f)$ of $V_\ell$ with
$w=e+if\in\ell$. Since $\sigma_\ell w=zw$ and $\sigma_\ell\bar w=z^{-1}\bar w$, we obtain
$$
\sigma_\ell(t)e=\cos (t)\, e-\sin (t)\, f, \qquad \sigma_\ell(t)f=\sin (t)\, e+\cos (t)\, f.
$$
The operator fixes the real orthogonal complement of $V_\ell$. This gives the stated rotation. Since $V_{\bar\ell}=V_{\ell}$ with reversed orientation, $\sigma_{\bar{\ell}}(t)=\sigma_{\ell}(t)^{-1}$.
\end{proof}

\subsection{ Factorization of unit algebraic loops}
The following is the key degree-reduction lemma.
\begin{lemma}\label{lem:degree-reduction}
Let $T(z)=\sum_{k=-r}^{r}z^kA_k$ be a unit algebraic loop of degree $r>0$, and let $\ell_r=\C A_r$. Then
\begin{equation*}
\widetilde T(z)=\sigma_{\bar\ell_r}(t)T(z)
\end{equation*}
is a unit algebraic loop of degree exactly $r-1$.
\end{lemma}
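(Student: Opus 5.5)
The plan is to compute the Laurent coefficients of $\widetilde T$ directly from the definition of $\sigma_{\bar\ell_r}$, using the orthogonal decomposition $\C^n=\ell_r\oplus N_{\ell_r}\oplus\bar\ell_r$. The decomposition is available because $\ell_r=\C A_r$ is isotropic by Lemma~\ref{lem:highest-coefficients}, since $B(A_r,A_r)=0$. Write $\ell=\ell_r$. Since $\bar{\bar\ell}=\ell$, we have $N_{\bar\ell}=N_\ell$, and
$$
\sigma_{\bar\ell}(t)=z\,\pi_{\bar\ell}+\pi_{N_\ell}+z^{-1}\pi_\ell .
$$
Applying this to $T(z)=\sum_{k=-r}^{r}z^kA_k$, the coefficient of $z^j$ in $\widetilde T$ is
$$
\widetilde A_j=\pi_{\bar\ell}A_{j-1}+\pi_{N_\ell}A_j+\pi_\ell A_{j+1},
$$
with the convention $A_k=0$ for $|k|>r$. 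So $\widetilde T$ is a Laurent polynomial with a priori powers between $-(r+1)$ and $r+1$.

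\textbf{Step 1 (unit length and reality).} By Lemma~\ref{lem:rotation}, $\sigma_{\bar\ell}(t)$ is a real orthogonal transformation of $\R^n$. Hence $\widetilde T(t)\in\Sph^{n-1}$ for all $t$. Because $\widetilde T$ is real-valued on the circle, its coefficients automatically satisfy $\widetilde A_{-j}=\overline{\widetilde A_j}$. It therefore suffices to control the positive-frequency coefficients.

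\textbf{Step 2 (killing frequencies $r+1$ and $r$).} We have $\widetilde A_{r+1}=\pi_{\bar\ell}A_r$, which vanishes since $A_r\in\ell\perp\bar\ell$. Next, $\widetilde A_r=\pi_{\bar\ell}A_{r-1}+\pi_{N_\ell}A_r$. The second term is zero since $A_r\in\ell$. For the first, the Hermitian projection onto $\bar\ell=\C\overline{A_r}$ is governed by
$$
\langle A_{r-1},\overline{A_r}\rangle=\sum_\alpha (A_{r-1})_\alpha (A_r)_\alpha=B(A_{r-1},A_r),
$$
which vanishes by the second relation in \eqref{eq:highest-relations}. Thus $\widetilde A_r=0$, and by conjugate symmetry $\widetilde A_{-r}=\widetilde A_{-r-1}=0$. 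So $\widetilde T$ is a unit algebraic loop of degree at most $r-1$.

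\textbf{Step 3 (degree exactly $r-1$).} We have
$$
\widetilde A_{r-1}=\pi_{\bar\ell}A_{r-2}+\pi_{N_\ell}A_{r-1}+\pi_\ell A_r=\pi_{\bar\ell}A_{r-2}+\pi_{N_\ell}A_{r-1}+A_r .
$$
The three summands lie in the mutually Hermitian-orthogonal subspaces $\bar\ell$, $N_\ell$ and $\ell$. Hence $|\widetilde A_{r-1}|^2\ge|A_r|^2>0$, so $\widetilde A_{r-1}\neq0$. When $r=1$, this is the constant term and $\widetilde T$ is a constant unit vector of degree $0$, which is consistent with the claim.

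The only point needing care is Step 2, specifically the identification of the Hermitian projection onto $\bar\ell$ with the bilinear form $B$. This is exactly what makes the second condition in Lemma~\ref{lem:highest-coefficients} do the work. Everything else is bookkeeping with the orthogonal decomposition.
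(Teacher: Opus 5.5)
Your proposal is correct and follows essentially the same route as the paper: it uses isotropy of $\ell_r$ from Lemma~\ref{lem:highest-coefficients}, kills the coefficients of degrees $r+1$ and $r$ via $A_r\in\ell_r$ and $\langle A_{r-1},\overline{A_r}\rangle=B(A_{r-1},A_r)=0$, and gets exactness of degree $r-1$ because the three summands of $\widetilde A_{r-1}$ lie in mutually orthogonal subspaces, with the $\ell_r$-component equal to $A_r\neq0$. Your explicit treatment of the negative frequencies by conjugate symmetry, and of the case $r=1$, only spells out what the paper leaves implicit.
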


\begin{proof}
Lemma~\ref{lem:highest-coefficients} implies $\ell_r$ is isotropic. Lemma~\ref{lem:rotation} shows that $\widetilde T$ is real and unit-valued.
Since
$$
\sigma_{\bar\ell_r}(t)
=z\pi_{\bar\ell_r}+\pi_{N_{\ell_r}}+z^{-1}\pi_{\ell_r},
$$
the possible coefficients of degrees $r+1$ and $r$ in $\widetilde T$ are
$$
\pi_{\bar\ell_r}A_r \quad \textrm{ and }\quad \pi_{N_{\ell_r}}A_r+\pi_{\bar\ell_r}A_{r-1} \text{ respectively}.
$$
The first vanishes because $A_r\in\ell_r$. In the second, the first term vanishes for the same reason, and the second vanishes because by \eqref{eq:highest-relations},
$$
\langle A_{r-1}, \overline{A_r}\rangle
=B(A_{r-1}, A_r)=0.
$$
Hence $\widetilde T$ has degree at most $r-1$.

The $(r-1)$-th degree coefficient is
$$
\pi_{\bar\ell_r}A_{r-2} +\pi_{N_{\ell_r}}A_{r-1}+\pi_{\ell_r}A_r
=\pi_{\bar\ell_r}A_{r-2} +\pi_{N_{\ell_r}}A_{r-1}+A_r.
$$
These three summands lie in mutually orthogonal subspaces, and the last is $A_r\ne0$. They cannot cancel and so the degree is exactly $r-1$.
\end{proof}

\begin{proposition}\label{prop:factorization}
Every unit algebraic loop of degree $r$ admits a representation
\begin{equation}\label{eq:factorization}
T(t)=\sigma_{\ell_r}(t)\sigma_{\ell_{r-1}}(t)
\cdots\sigma_{\ell_1}(t)\tau_0,
\end{equation}
where $\ell_1, \ldots, \ell_r$ are isotropic complex lines and $\tau_0\in\Sph^{n-1}$. For $r=0$, the product is the identity. Conversely, any product of this form is a unit algebraic loop of degree at most $r$.
\end{proposition}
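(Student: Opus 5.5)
The plan is to argue by induction on the degree $r$, using the degree-reduction Lemma~\ref{lem:degree-reduction} as the inductive step.

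\emph{Base case $r=0$.} Here $T(z)=A_0$ is constant. It is real because $A_0=\overline{A_0}$, and it is unit-valued, so $T=\tau_0\in\Sph^{n-1}$. This is \eqref{eq:factorization} with the empty product.

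\emph{Inductive step $r>0$.} Set $\ell_r=\C A_r$; it is isotropic by Lemma~\ref{lem:highest-coefficients}. By Lemma~\ref{lem:degree-reduction}, the loop $\widetilde T=\sigma_{\bar\ell_r}(t)T$ is a unit algebraic loop of degree exactly $r-1$. The induction hypothesis then gives
\[
\widetilde T=\sigma_{\ell_{r-1}}(t)\cdots\sigma_{\ell_1}(t)\tau_0
\]
with each $\ell_j$ isotropic. By Lemma~\ref{lem:rotation}, $\sigma_{\bar\ell_r}(t)^{-1}=\sigma_{\ell_r}(t)$. Applying this operator to both sides yields \eqref{eq:factorization} for $T$. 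It is worth noting that $\ell_r$ is forced by $T$: it must be the line spanned by the top coefficient.

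\emph{Converse.} Each $\sigma_\ell(t)=z\pi_\ell+\pi_{N_\ell}+z^{-1}\pi_{\bar\ell}$ is a Laurent polynomial in $z$ with operator coefficients of degree at most $1$. Hence a product of $r$ such factors applied to $\tau_0$ is a $\C^n$-valued Laurent polynomial of degree at most $r$. By Lemma~\ref{lem:rotation}, each factor preserves $\R^n$ and is real orthogonal there. Therefore the product is real for $|z|=1$ and has norm $|\tau_0|=1$. Realness on the circle is exactly the coefficient symmetry $A_{-k}=\overline{A_k}$, since a Laurent polynomial agreeing with its conjugate on $|z|=1$ has conjugate-symmetric coefficients. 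So the product is a unit algebraic loop of degree at most $r$.

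No step is a real obstacle, because the substantive work, that the degree drops by exactly one while realness and unit norm are preserved, is already done in Lemma~\ref{lem:degree-reduction}. The only points needing care are the inversion identity $\sigma_{\bar\ell}=\sigma_\ell^{-1}$ and the coefficient-symmetry remark in the converse.
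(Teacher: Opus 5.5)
Your proposal is correct and follows the paper's proof. It uses induction on the degree with Lemma~\ref{lem:degree-reduction} as the inductive step, undoes $\sigma_{\bar\ell_r}(t)$ by applying $\sigma_{\ell_r}(t)$ via Lemma~\ref{lem:rotation}, and gets the converse from each factor being real orthogonal of Fourier degree one; your extra remarks (that $\ell_r$ must be $\C A_r$, and that realness on the circle is equivalent to $A_{-k}=\overline{A_k}$) are accurate but not needed.
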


\begin{proof}
For degree zero, take $\tau_0=T$. For $r>0$, choose $\ell_r=\C A_r$ and apply
Lemma~\ref{lem:degree-reduction}. By induction, $$
\sigma_{\bar\ell_r}(t)T(t)
=\sigma_{\ell_{r-1}}(t)\cdots\sigma_{\ell_1}(t)\tau_0.
$$
Multiplication by $\sigma_{\ell_r}(t)$ proves \eqref{eq:factorization}. Conversely, each factor is real orthogonal on $\R^n$ and has Fourier degree one. Their product applied to $\tau_0$ is therefore real, unit-valued, and of degree at most $r$.
\end{proof}

\subsection{ Higher order equality cases}
For an oriented real two-plane $V$, write $\mathcal R_V(t)$ for the rotation through angle $-t$ in $V$, acting as the identity on $V^\perp$. By Lemma~\ref{lem:rotation}, this is the real form of $\sigma_\ell(t)$ when $V=V_\ell$.

\begin{theorem}\label{thm:general-rigidity}
Fix $m\ge2$, and let $M$ and $C$ satisfy \hyperref[ass:A]{\textbf{Assumption A}}. The following are equivalent:
\begin{enumerate}
\item Equality holds in \eqref{eq:hierarchy-expanded}.
\item The unit tangent $T(t)=\frac{\dot x(t)}{R}$ is a unit algebraic loop of
degree at most $m$.
\item There are an integer $1\le r\le m$, oriented two-planes
$V_1, \ldots, V_r\subset\R^n$, and $\tau_0\in\Sph^{n-1}$ such that
\begin{equation}\label{eq:tangent-factorization}
\frac{\dot x(t)}{R}= \mathcal R_{V_r}(t)\mathcal R_{V_{r-1}}(t)
\cdots\mathcal R_{V_1}(t)\tau_0.
\end{equation}
\end{enumerate}
\end{theorem}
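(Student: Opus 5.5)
I will prove the cycle (1)$\Rightarrow$(2)$\Rightarrow$(3)$\Rightarrow$(1), relying on Theorem~\ref{thm:main} for the Fourier characterization and on Proposition~\ref{prop:factorization} for the algebraic part.

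\emph{(1)$\Rightarrow$(2).} By Theorem~\ref{thm:main}, equality in \eqref{eq:hierarchy-expanded} means that $x(t)=\sum_{k=1}^m(C_k\cos kt+D_k\sin kt)$. Differentiating shows that $\dot x$ is a real trigonometric polynomial of degree at most $m$ with no constant term. Since $t=2\pi s/L$, we have $|\dot x|\equiv R$, so $T=\dot x/R$ is a unit algebraic loop of degree at most $m$. Its degree $r$ is at least $1$: the constant term of $T$ vanishes, and $T$ is unit-valued, hence nonzero.

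\emph{(2)$\Rightarrow$(3).} I apply Proposition~\ref{prop:factorization} to $T$, whose degree satisfies $1\le r\le m$. This gives isotropic lines $\ell_1,\dots,\ell_r$ and a vector $\tau_0\in\Sph^{n-1}$ with $T=\sigma_{\ell_r}\cdots\sigma_{\ell_1}\tau_0$. By Lemma~\ref{lem:rotation}, each $\sigma_{\ell_j}(t)$ restricts on $\R^n$ to $\mathcal R_{V_j}(t)$, where $V_j=V_{\ell_j}$ is the oriented plane determined by $\ell_j$. This yields \eqref{eq:tangent-factorization}.

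\emph{(3)$\Rightarrow$(1).} Each $\mathcal R_{V_j}(t)$ is the real form of $\sigma_{\ell_j}(t)$ with $\ell_j=\C(e_j+if_j)$, where $(e_j,f_j)$ is an oriented orthonormal basis of $V_j$. Its entries are therefore Laurent polynomials in $z$ of degree one. The product applied to $\tau_0$ is then a trigonometric polynomial of degree at most $r\le m$, which is exactly the converse half of Proposition~\ref{prop:factorization}.

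Hence $\dot x=RT$ has Fourier support in $|k|\le m$. I then integrate $\dot x$ term by term. The constant Fourier coefficient of $\dot x$ vanishes because $x$ is periodic: $\int_0^{2\pi}\dot x\,dt=0$. So integration gives $x(t)=c+\sum_{k=1}^m(C_k\cos kt+D_k\sin kt)$, and the centroid normalization $\int x\,dt=0$ in Assumption~A forces $c=0$. Finally, Theorem~\ref{thm:main} says this finite expansion is exactly the equality condition, which gives (1).

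The main subtlety is bookkeeping rather than substance. First, the lower bound $r\ge1$ in (3) has to be justified, which it is by $A_0=0$. Second, (3) must be read as a hypothesis on the given curve $C$, whose arclength parametrization already satisfies $|\dot x|=R$; so no additional closing or speed conditions need to be checked beyond the periodicity and centroid condition used above.
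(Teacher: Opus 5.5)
Your proposal is correct and follows essentially the same route as the paper. In the forward direction you use Theorem~\ref{thm:main} for the finite Fourier expansion, closedness to rule out a constant $T$, and Proposition~\ref{prop:factorization} with Lemma~\ref{lem:rotation}; in the converse you integrate the degree-$\le m$ tangent using closure and the centroid condition. The only difference is cosmetic: you arrange it as the cycle (1)$\Rightarrow$(2)$\Rightarrow$(3)$\Rightarrow$(1), whereas the paper proves (1)$\Rightarrow$(3) and then notes that one integration argument gives both (3)$\Rightarrow$(1) and (2)$\Rightarrow$(1).
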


\begin{remark}
Note that the planes $V_i$ in the above decomposition need not be mutually orthogonal. In fact, higher Fourier modes can be generated by repeatedly applying the same elementary rotation: for any oriented two-plane $V$ and positive integer $k$,
$$\mathcal R_V(t)^k=\mathcal R_V(kt). $$
Thus, repeated rotations in the same plane generate higher Fourier modes, while compositions of rotations in different planes produce non-trivial interactions between Fourier modes, such as the one in Example \ref{ex:third-order}.
\end{remark}

\begin{proof}
If equality holds, \eqref{eq:equality-fourier} shows that $\dot x$ has degree at most $m$. Since $|\dot x|=R$, its normalization $T$ is a unit algebraic loop. It cannot be constant: a nonzero constant velocity cannot parametrize a closed curve. Thus its degree $r$ satisfies $1\le r\le m$. Proposition~\ref{prop:factorization}, followed by Lemma~\ref{lem:rotation}, proves \eqref{eq:tangent-factorization}.

Conversely, the product in \eqref{eq:tangent-factorization} has Fourier degree at most $r\le m$. Closure of the curve gives
\begin{equation*}
\int_0^{2\pi}\dot x(t) d t=0,
\end{equation*}
so the zero-frequency coefficient of $\dot x$ vanishes. Integrating its Fourier series therefore gives a trigonometric polynomial of degree at most $m$ with zero constant term, thanks to the centroid condition. Thus \eqref{eq:equality-fourier} holds, which gives equality in \eqref{eq:hierarchy-expanded}. The same integration argument proves directly that the second condition implies the first.
\end{proof}

\subsection{Non-flat minimal equality cases in $\mathbb R^3$ at higher orders}
Let us revisit the equality cases for higher order $m$, under the additional assumption that $M$ is minimal in $\R^3$. As shown in Corollaries~\ref{cor:minimal} and \ref{cor:second-minimal}, equality at the first or second order forces $M$ to be a flat disk. Example~\ref{ex:third-order} already shows that this boundary rigidity fails for $m=3$. We now strengthen this observation by showing that non-flat minimal disks attaining equality exist at every order $m\ge 3$.

\begin{theorem}
For every integer $m\ge 3$, there exists a smooth embedded non-flat minimal disk in $\R^3$ such that equality holds in
\eqref{eq:hierarchy-expanded}, i.e.
\begin{equation*}
\begin{aligned}
&\sum_{l=1}^{m-1} s_{m,l} R^{2l-3}
\int_C \left|
\left(\frac{d}{ds}\right)^{l-1}
(x+R^2\kappa)\right|^2 ds\\
&\quad-\frac{(-1)^m}{2}(m-1)!(m+1)!
\left(\frac{L^2-4\pi A}{\pi}
-\frac{1}{R}\int_C|x-R\nu|^2 ds\right)
=0.
\end{aligned}
\end{equation*}

\end{theorem}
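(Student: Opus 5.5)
We construct, for each $m\ge3$, a smooth embedded simple closed curve $C\subset\R^3$ with zero centroid whose normalized arclength parametrization is a trigonometric polynomial of degree exactly $m$ (or at most $m$ and nonplanar). We then span $C$ by a minimal disk. By Theorem~\ref{thm:main}, equality in \eqref{eq:hierarchy-expanded} depends only on $C$, and the $\int_M x\cdot H$ term drops out for minimal $M$. So the statement reduces to two tasks: produce the curve, and produce an embedded minimal disk bounded by it. Non-flatness is then automatic: if $M$ were flat, $C$ would be planar. So it suffices that $C$ is nonplanar.

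\textbf{Step 1 (curves).} Use Theorem~\ref{thm:general-rigidity}(3) in reverse. Choose $\tau_0$ and rotations so that $T(t)=\mathcal R_{V_1}((m-1)t)\mathcal R_{V_2}(t)\tau$, generalizing Example~\ref{ex:third-order}, with $V_1=\Span\{e_3,e_2\}$, $V_2=\Span\{e_1,e_2\}$ and $\tau=\frac{1}{\sqrt{a^2+1}}(ae_1+e_3)$ for a parameter $a$. Then $T$ is a unit algebraic loop of degree at most $m$. We need its constant Fourier coefficient to vanish, so that $x=R\int T$ closes up. Subtracting the mean then gives zero centroid. A direct computation, as in the example, shows that $T$ has the form $(\,a'\cos t,\ \mathrm{Re/Im}$ of a Laurent polynomial in $z$ with frequencies $1,\ldots,m\,)$. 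The zero-frequency term vanishes provided $m-1\ne 1$ or the $\tau$-component along the axis of $V_1$ is chosen suitably. For $m=3$ the example shows it works, and we check the general case by the same computation.

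\textbf{Step 2 (simplicity).} Imitate the argument of Example~\ref{ex:third-order}. The first coordinate is a multiple of $\sin t$, so $x(t)=x(u)$ forces $u=t$ or $u=\pi-t$. In the latter case we show the second coordinate difference is a polynomial in $c=\cos t$ with no roots in $(-1,1)\setminus\{0\}$, possibly after rescaling the higher mode by a small parameter. An alternative, more robust route is to perturb: take $x_\varepsilon$ with $T_\varepsilon=\mathcal R_{V_1}((m-1)\varepsilon$-weighted$)\cdots$. A cleaner version of the same idea is to choose $\tau$ nearly in $V_2$, so that the curve is a $C^1$-small perturbation of a planar circle. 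Then the projection $\gamma$ to the $(x_1,x_2)$-plane is a small perturbation of a round circle, hence an embedded convex curve, while the degree-$m$ mode is still present and $x_3$ is nonconstant.

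\textbf{Step 3 (minimal disk) and main obstacle.} Arrange, as in Step 2, that $C$ is a graph over a convex planar Jordan curve $\gamma$. By the classical Rad\'o theorem, the Dirichlet problem for the minimal surface equation on the convex domain $D$ bounded by $\gamma$, with the smooth boundary data $x_3$, has a solution. Its graph is an embedded minimal disk, smooth up to the boundary since $\gamma$ and the data are smooth and $D$ is strictly convex. The disk is not flat because $C$ is nonplanar, that is, $x_3$ is not affine on $\gamma$. The main obstacle is Step 1/2: making the explicit rotation construction produce, for every $m$, a closed curve (zero mean tangent) that is nearly a planar circle and still has degree $m$. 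If the two-rotation family fails the closure condition, I would instead take $T=\mathcal R_{V}(\,(m-1)t)\mathcal R_{W}(t)\tau$ with $V$ tilted by a small angle $\theta$ from $W$, and use $\theta$ and $\tau$ as free parameters. Closure is one vector equation on the zero mode, to be solved by the implicit function theorem near $\theta=0$; at $\theta=0$ the loop is the planar circle $\mathcal R_W(mt)\tau$.
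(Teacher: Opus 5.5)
There is a genuine gap in Steps 2--3. Your rotation family never yields a curve that is a graph over a convex planar Jordan curve, and that is exactly what the Rad\'o/Jenkins--Serrin step needs. The problem is the order of the factors. With $k=m-1$ and $\tau=(ae_1+e_3)/\sqrt{a^2+1}$, your loop is
\[
\sqrt{a^2+1}\,T=\bigl(a\cos t,\ -a\sin t\cos kt-\sin kt,\ -a\sin t\sin kt+\cos kt\bigr).
\]
Closure is automatic once $k\ge2$, so closure was never the issue.

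\textbf{The near-circle claim is false.} Letting $\tau\to e_1$ (that is, $a\to\infty$, ``$\tau$ nearly in $V_2$'') gives $T\to(\cos t,-\sin t\cos kt,-\sin t\sin kt)$, which is not a planar circle. The inner factor $\mathcal R_{V_2}(t)$ creates an $e_2$-component, and the outer high-frequency rotation $\mathcal R_{V_1}(kt)$ then moves it out of the plane. The other limit, $a\to0$, gives the tangent of a $k$-fold covered circle.

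\textbf{Your first route for Step 2 also fails.} For even $m$, $x_2(t)-x_2(\pi-t)$ is a nonzero multiple of $\cos((m-1)t)$, independent of $a$. So the $(x_1,x_2)$-projection has a double point at $t=\pi/(2(m-1))$, $u=\pi-t$. Even for $m=3$, $a=2$ (Example~\ref{ex:third-order}), the projection is simple but not convex: its signed curvature is negative at $t=0$ and positive at $t=\pi/2$. Accordingly, the paper only produces a smooth, not minimal, spanning graph there.

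\textbf{The fallback does not help.} The family near $\theta=0$ is built around $\mathcal R_W(mt)\tau$. This is the tangent of an $m$-fold covered circle, not ``the planar circle.'' Nearby curves are $C^1$-close to it, so their projections have rotation index $m$ and cannot be convex Jordan curves. The ``$(m-1)\varepsilon$-weighted'' rotation is also not available, since $\mathcal R_V(ct)$ is $2\pi$-periodic only for integer $c$.

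\textbf{The missing idea} is to put the high-frequency rotation on the inside, acting on a vector that lies nearly on its fixed axis. This is the paper's construction:
\[
T_\varepsilon=\mathcal R_W(t)\,\mathcal R_V((m-1)t)\,\tau_\varepsilon,
\]
with $W=\Span\{e_1,e_2\}$, $V=\Span\{e_2,e_3\}$ and $\tau_\varepsilon=(e_1+\varepsilon e_2)/\sqrt{1+\varepsilon^2}$. Since $e_1\perp V$, the inner factor nearly fixes $\tau_\varepsilon$, so $T_\varepsilon=\mathcal R_W(t)e_1+O(\varepsilon)$ together with all derivatives. Hence $x_\varepsilon$ is an $O(\varepsilon)$-perturbation of the once-traversed unit circle in $W$, and its projection to $W$ is strictly convex for small $\varepsilon$. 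Closure holds because $m-1\ge2$. Nonplanarity comes from the frequency-$(m-1)$ mode in $x_3$, which does not occur in $x_1$ or $x_2$. With this family, your Step 3 (Rad\'o over a convex domain, non-flatness from nonplanarity) goes through as you wrote it.
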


\begin{proof}
Fix an integer $m\ge 3$ and put $k=m-1$. Let $e_i$ be the standard orthonormal basis of $\R^3$, and consider the oriented
two-planes
$$
V=\Span\{e_2, e_3\},
\qquad
W=\Span\{e_1, e_2\},
$$
with orientations determined by the displayed
ordered bases. For $\varepsilon>0$, set
$$
\tau_\varepsilon =\frac{e_1+\varepsilon e_2} {\sqrt{1+\varepsilon^2}}.
$$
Motivated by Theorem~\ref{thm:general-rigidity}, define
$$
T_\varepsilon(t) =\mathcal R_W(t)\mathcal R_V(t)^k\tau_\varepsilon =\mathcal R_W(t)\mathcal R_V(kt)\tau_\varepsilon\in \mathbb S^2.
$$
A direct computation gives
$$
T_\varepsilon(t)
=\frac{1}{\sqrt{1+\varepsilon^2}}
\begin{pmatrix}
\cos t+\varepsilon\sin t\cos kt\\
-\sin t+\varepsilon\cos t\cos kt\\
-\varepsilon\sin kt
\end{pmatrix}.
$$
Since $k\ge 2$, the Fourier expansion of $T_\varepsilon$ has no constant term.
Consequently, it can be integrated to a $2\pi$-periodic curve $x_\varepsilon(t)$ with zero centroid and
$|\dot x_\varepsilon(t)|=1$.

Explicitly, integration yields
$$
x_\varepsilon(t)
=\frac{1}{\sqrt{1+\varepsilon^2}}
\begin{pmatrix}
\displaystyle
\sin t+\frac{\varepsilon}{2}
\left(
\frac{\cos((k-1)t)}{k-1}
-\frac{\cos((k+1)t)}{k+1}
\right)
\\[6pt]
\displaystyle
\cos t+\frac{\varepsilon}{2}
\left(
\frac{\sin((k-1)t)}{k-1}
+\frac{\sin((k+1)t)}{k+1}
\right)
\\[6pt]
\displaystyle
\frac{\varepsilon}{k}\cos kt
\end{pmatrix}.
$$
For $\varepsilon\ne 0$, this curve has Fourier degree exactly $k+1=m$. Thus it satisfies the equality condition for the inequality of order $m$.

For sufficiently small $\varepsilon>0$, the  projection of $x_\varepsilon$ onto $W$ is a smooth strictly convex simple closed curve. By the classical solvability of the minimal surface Dirichlet problem over convex domains \cite{JenkinsSerrin1966}, $x_\varepsilon$ bounds a smooth embedded minimal graph $M_\varepsilon$ in $\R^3$. Moreover, $M_\varepsilon$ is non-flat, since $x_\varepsilon$ is nonplanar: $x_3$ has a nonzero Fourier mode of order $k$, which does not occur in $x_1,x_2$.
\end{proof}

\end{document}